\newtheorem{thm}{Theorem}
\newtheorem{remark}{Remark}
\newtheorem{lemma}[thm]{Lemma}
\newtheorem{prop}[thm]{Proposition}
\newcommand{\R}{{\mathbb R}}
\newcommand{\N}{\mathcal{N}}
\newcommand{\nc}{\newcommand}
\nc{\e}{\epsilon}
\nc{\al}{\alpha}
\nc{\be}{\beta}
\nc{\del}{\delta}
\nc{\G}{\Gamma}
\nc{\g}{\gamma}
\nc{\ka}{\kappa}
\nc{\lam}{\lambda}
\nc{\Lam}{\Lambda}
\nc{\Om}{\Omega}
\nc{\Omt}{\tilde{\Omega}}
\nc{\ta}{\tau}
\nc{\w}{\omega}
\nc{\io}{\iota}
\nc{\h}{\theta}
\nc{\z}{\zeta}
\nc{\s}{\sigma}
\nc{\Si}{\Sigma}
\newcommand{\p}{\partial}
\newcommand{\divv}{{\rm div\,}}
\newcommand{\Hess}{{\rm Hess\,}}
\newcommand{\re}{\operatorname{Re}}
\newcommand{\Span}{{\rm span\,}}
\newcommand{\ip}[2]{\left\langle #1, #2 \right\rangle}
\newcommand{\ra}{\rightarrow}
\newcommand{\1}{\mathbf{1}}
\nc{\ran}{\rangle}
\nc{\lan}{\langle}
\newcommand{\DETAILS}[1]{}
\newcommand{\DATUM}{April, 2012}              
\date{April, 2012} 
\title{Stability of spherical collapse under mean curvature flow}
\author{Wenbin Kong$^\dagger$\thanks{This work is a part of this
author's Ph.D. thesis.} and
I.M. Sigal$^\dagger$\thanks{Supported by NSERC Grant NA7901}\thanks{The corresponding author, im.sigal@utoronto.ca}
\\
Department~of Mathematics, University of Toronto, Toronto, Canada }
\begin{document}

\maketitle

\begin{abstract} 
We study the mean curvature flow of hypersurfaces in $\R^{n+1}$, with initial surfaces sufficiently close to the standard $n$-dimensional sphere. The closeness is in the Sobolev norm with the index greater than $\frac{n}{2}+1$ and therefore it does not impose restrictions of the mean curvature of the initial surface. We show that 
the solution of such a flow collapses to a point, $z_*$, in a
finite time, $t_*$, approaching exponentially fast the spheres of radii $\sqrt{2n(t_*-t)}$, centered at $z(t)$, with the latter converging to $z_*$. 

 \medskip
  Keywords: mean curvature flow, evolution of surfaces, collapse of surfaces, asymptotic stability, asymptotic dynamics, dynamics of surfaces,  mean curvature soliton, nonlinear parabolic equation.
\end{abstract}
\section{Introduction}\label{intro}

We study the behavior of mean curvature flow (MCF) of hypersurfaces in $\R^{n+1}$ with initial conditions close to spheres.
Given an initial simple, closed hypersurface $M_0$ in $\R^{n+1}$, the MCF determines a family $\{M_t |\ t\geq 0\}$ of closed hypersurfaces in $\R^{n+1}$, given by immersions $X(\cdot,t):\Omega\rightarrow \mathbb{R}^{n+1}$, satisfying the
following evolution equation:
\begin{equation}\label{MCF}
\frac{\p X}{\p t}=-H(X)\nu(X),
\end{equation}
where $\Omega\subset\mathbb{R}^{n+1}$ is a  fixed
$n-$dimensional hypersurface,  $\nu(X)$ and
$H(X)$ are the outward unit normal vector and mean curvature at
$X\in M_t$, respectively. We show that if $M_0$ is close to an Euclidean $n$-sphere in the norm $H^s$, $s>\frac{n}{2}+1$,
then the solution $M_t$ collapses to a round point in a finite time. Due to the translation and dilation symmetry of \eqref{MCF},
it suffices to consider $M_0$ close to the standard $n$-sphere (the Euclidean sphere with radius $1$ and center at the origin).

The mean curvature flow 
is the steepest descent flow for the area functional. It arises
in applications, such as models of annealing metals \cite{Mullins} and other problems involving phase separation and moving interfaces (\cite{ESS, Ilmanen1, BK}). It has been recently successfully applied by Huisken and Sinestrari to topological classification of surfaces and submanifolds (\cite{ HS3}, see also \cite{lauer}).  It is closely related to the Ricci and inverse mean curvature flow.

Mean curvature flow was first studied by Brakke \cite{brakke}. Evans and Spruck \cite{ES1}
constructed a unique weak solution of the nonlinear PDE for certain functions whose zero level set evolves in time according to its mean curvature. Similar results were obtained by Chen, Giga and Goto
\cite{CGG} and by Ambrosio and Soner \cite{AS}. The short-time existence in H\"older spaces was proven in \cite{brakke, Huisken, EH2, ES1, Ilmanen2}.
Sharp results on local regularity were established in \cite{W4}. Higher codimension mean curvature flows were studied by Mu-Tao Wang \cite{wangmutao} 
(see also \cite{leelee}). For more results on the existence, uniqueness and regularity of the solution one can see \cite{brakke, ES2, ES3, ilmanen, clutterbuck}.

The question of the long time existence is, as usual, more subtle. Ecker and Huisken \cite{EH1} showed longtime existence for mean curvature flow in the case of linearly growing graphs. Later they \cite{EH2} proved that if the initial surface is a locally Lipschitz continuous entire graph over $\R^n$ then the solution will exist for all times.

However, the most interesting aspect of the mean curvature flow is formation of singularities, with two canonical examples being 
Eucledian spheres or cylinders,  collapsing to their center or axis, respectively, 
 their radii evolving as $\sqrt{2n(t_*-t)}$ or $\sqrt{2(n-1)(t_*-t)}$. 
 These two cases suggest collapse to a round point or round line as two possible scenarios of formation of singularities.  
 The former scenario, indeed, showed up in many works, starting with the result,
 due to Gage and Hamilton
\cite{GH}, who showed that  initial convex plane curves shrink to a 'round' point, i.e. approach asymptotically circles of radii $\sqrt{2(t_*-t)}$. Later, Grayson \cite{grayson} showed that any embedded plane curves always shrink smoothly until they are convex, and
then to points by the evolution theorem of convex curves. For higher dimensions the latter result does not hold. In a seminal work, \cite{Huisken}, Huisken showed that under mean curvature flow a convex hypersurface in $\R^n$, $n\geq 3$, shrinks smoothly to a point, getting spherical in the limit. These result was extended in \cite{Huis2, HS1, HS2, W1, W3}.

It was conjectured by Huisken that starting at a generic smooth closed embedded surface in $\R^3$, the mean curvature flow remains smooth until it arrives at a singularity in a neighborhood of which the flow looks like concentric spheres or cylinders. 
A part of this conjecture was proved in \cite{CM}, where it was shown that the only singularities which cannot be perturbed away are spheres and cylinders. For more results see \cite{ cooper, LS1, LS2}. The present work shows that the collapsing sphere solutions are stable:

\begin{thm}\label{mainthm}
Let $\Omega=S^n$ be  the standard $n$-dimensional sphere and let a surface $M_0$, defined by an immersion $x_0\in H^s(\Omega) $, for some
$s>\frac{n}{2}+1$, be close to $S^n$, in the sense that $\|x_0-\1\|_{H^s}\ll 1$.
\DETAILS{in the Sobolev metric with the index
greater than $\frac{n}{2}+1$ (and be symmetric with respect to the hyperplanes
$x_i=0,\ i=1, \cdots, n+1$)}
Then there exist $t_*<\infty$ and $z_*\in \R^{n+1}$, s.t. \eqref{MCF} has the unique solution, $M_t,\ t<t_*$, and this solution contracts to the point $z_*$, as $t_* \ra \infty$. Moreover, $M_t$ is defined by an immersion $x(\cdot, t)\in H^s(S^n) $, with the same $s$, of the form
$$x(\omega, t)= z(t)+R(\omega, t)\omega,$$
for some $z(t)\in \R^{n+1}$ and $R(\cdot, t)\in H^s(S^n)$, satisfying
$z(t)=z_*+O((t_*-t)^{\frac{1}{2a_*}(n+\frac{1}{2}-\frac{1}{2n})})$ and
\begin{equation} \label{St}
R(\omega, t)=\lambda(t)(\sqrt{\frac{n}{a(t)}}+\xi(\omega, t)),
\end{equation} with
$\lambda(t), a(t)$ and  $\xi(\cdot, t)$ which satisfy
$$\lambda(t)=\sqrt{2a_*(t_*-t)}+O((t_*-t)^{\frac{1}{2}+\frac{1}{2a_*}(1-\frac{1}{2n})}),$$
$a(t)=-\lambda(t)\dot{\lambda}(t)=a_*+O((t_*-t)^{\frac{1}{2a_*}(1-\frac{1}{2n})})$ 
and $\|\xi(\cdot, t)\|_{H^s}\lesssim (t_*-t)^{\frac{1}{2n}}$.
Moreover, $|z_*| \ll 1$.
\end{thm}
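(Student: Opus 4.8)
The plan is to reduce \eqref{MCF} to a single quasilinear parabolic equation for a radial graph function over $S^n$, to strip off the self‑similarly collapsing sphere together with finitely many modulation parameters, and then to control the remaining fluctuation by a dissipative energy estimate resting on the spectral gap of the linearized operator. Since $s>\tfrac n2+1$ gives $H^s(S^n)\hookrightarrow C^1(S^n)$, the initial surface is $C^1$-close to the unit sphere, and as long as the flow stays $C^1$-close to some round sphere one can write, after a tangential reparametrization, $x(\omega,t)=z(t)+R(\omega,t)\omega$ with $R(\cdot,t)\in H^s(S^n)$; then \eqref{MCF} becomes a quasilinear equation of the schematic form $\partial_t R=-\tfrac nR+\tfrac1{R^2}\Delta_{S^n}R+(\text{lower order in }\nabla R)-\langle\dot z,\omega\rangle+\cdots$, the last term encoding the freedom in the location of the center. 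Insert the ansatz \eqref{St}, $R(\omega,t)=\lambda(t)\bigl(\sqrt{n/a(t)}+\xi(\omega,t)\bigr)$ with $a(t):=-\lambda(t)\dot\lambda(t)$, and fix $\lambda(t)$ and $z(t)$ — hence $a(t)$ — by demanding $\xi(\cdot,t)\perp1$ and $\xi(\cdot,t)\perp\omega_i$, $i=1,\dots,n+1$, in $L^2(S^n)$; these constraints become evolution equations for $\lambda$ and $z$ (to leading order $\dot\lambda\approx-n\lambda/\overline{R}^2$, with $\overline{R}$ the spherical average of $R$). Passing to the blow-up time $\tau(t)=\int_0^t\lambda(s)^{-2}\,ds\to\infty$ and the rescaled profile $\rho=R/\lambda=\sqrt{n/a}+\xi$, the evolution takes the form $\partial_\tau\xi=-L_a\xi+N(\xi)+(\text{terms linear in }\dot a,\dot\lambda,\dot z)$, where $L_a$ is the linearization of $H-a\langle x,\nu\rangle$ at the sphere of radius $\sqrt{n/a}$ and $N$ collects the quadratic and higher nonlinearities.

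The next step is the analysis of $L_a$. On the round sphere it is constant‑coefficient in the spherical‑harmonic decomposition, its eigenvalue on degree-$\ell$ harmonics being an explicit decreasing affine function of $\ell(\ell+n-1)$: positive on $\ell=0$ (the dilation/collapse‑time direction), positive on $\ell=1$ (translations, which are neutral in the unrescaled picture), and negative and bounded away from zero on $\ell\ge2$. The orthogonality conditions remove exactly the $\ell=0$ and $\ell=1$ parts of $\xi$, so the fluctuation lives where $-L_a$ is strictly negative definite — this is the dissipation that forces decay. Projecting the full equation onto $\ell=0$ and $\ell=1$ yields the modulation equations: the $\ell=0$ projection gives $\dot a=O(\|\xi\|^2)$, whence $a(\tau)\to a_*$; the $\ell=1$ projection bounds $|\dot z|$ by the scale times a power of $\|\xi\|$, whence $z(t)\to z_*$. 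Combined with $\lambda^2\sim2a_*(t_*-t)$ (equivalently $t_*-t\sim e^{-2a_*\tau}$), integrating these ODEs against the established decay of $\xi$ produces the quantitative rates for $\lambda-\sqrt{2a_*(t_*-t)}$, $a-a_*$ and $z-z_*$ in the statement.

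With these ingredients I would run a bootstrap/continuity argument on the maximal interval on which $\|\xi(\cdot,t)\|_{H^s}$ and the deviations of the modulation parameters satisfy prescribed small, decaying bounds, and strictly improve those bounds. The engine is a weighted $H^s$ energy estimate for $\xi$: differentiate $\|\xi(\cdot,t)\|_{H^s}^2$, use the spectral gap of $-L_a$ on the $\ell\ge2$ subspace to dominate the leading dissipative term, absorb the commutators generated by the $\xi$-dependent principal part, bound $N(\xi)$ by product and Sobolev estimates (legitimate because $s>\tfrac n2+1$), and treat $\dot a,\dot\lambda,\dot z$ as higher order via the modulation equations. This closes the estimates, shows that the graph representation and all bounds persist up to $t_*$, gives $\|\xi(\cdot,t)\|_{H^s}\lesssim(t_*-t)^{1/(2n)}$, and delivers $a\to a_*$, $z\to z_*$, $\lambda\to0$, so that $M_t$ contracts to the round point $z_*$; the finiteness of $t_*$ is forced by comparison with a shrinking sphere. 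Finally $|z_*|\ll1$ because $z(0)$ is within $O(\|x_0-\1\|_{H^s})$ of the origin while $|z_*-z(0)|\le\int_0^{t_*}|\dot z|\,dt$ is small by the decay just obtained.

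The hard part will be the $H^s$ energy estimate for $\xi$ at this borderline regularity. The radial‑graph mean curvature flow is genuinely quasilinear, so the effective Laplacian in the $\xi$-equation carries coefficients depending on $\xi$ and $\nabla\xi$ and drifting with $a(t)$, and one must extract the full dissipation of $-L_a$ on the $\ell\ge2$ subspace while paying only affordable commutator and nonlinear costs, with no spare derivatives to spend; in parallel one must verify the non‑degeneracy of the modulation map — so that the orthogonality constraints are genuinely solvable for $\lambda$ and $z$ — and check that the resulting modulation ODEs are consistent with the assumed smallness, so that the bootstrap truly closes rather than being merely self‑referential.
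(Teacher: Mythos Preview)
Your proposal is correct and tracks the paper's argument essentially step for step: radial graph over $S^n$, blow-up time $\tau$, decomposition $\rho=\sqrt{n/a}+\xi$ with $\xi\perp\{1,\omega^1,\dots,\omega^{n+1}\}$, modulation equations for $a$ and $z$ from the $\ell=0,1$ projections, and an energy bootstrap driven by the spectral gap of $L_a=\tfrac{a}{n}(-\Delta-2n)$ on $\ell\ge2$. The only differences are cosmetic: the paper uses the adapted Lyapunov functional $\Lambda_k(\xi)=\tfrac12\langle\xi,L_a^k\xi\rangle$ (equivalent to $\|\xi\|_{H^k}^2$) rather than the bare Sobolev norm, which makes the dissipation inequality $\partial_\tau\Lambda_k\le-\tfrac{a}{n}\Lambda_k-\bigl[\tfrac12-C(\Lambda_k^{1/2}+\Lambda_k^k)\bigr]\|L_a^{(k+1)/2}\xi\|^2$ fall out without the commutator bookkeeping you flag as the hard part; and your description of the signs of the eigenvalues of $L_a$ is inverted (they are negative on $\ell=0,1$ and positive on $\ell\ge2$), though your dynamical conclusions are oriented correctly.
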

Note that our condition on the initial surface does not impose any  restrictions of the mean curvature of this surface

In contrast to the above result, it was shown in \cite{gangsigal}, that for  an open set of  initial conditions arbitrary close to an infinite cylinder, the mean curvature flow does not converge to a (round) line, but develops a singularity at a point in a finite time  ('neck-pinching'), provided initial conditions have an arbitrary shallow neck. 
Thus, unlike spheres, the cylinders are not stable under the mean curvature flow. 
%
(For earlier results dealing with the neck-pinching for compact ( barbell shaped) or periodic (torus-like) surfaces see~\cite{AlAnGi1, AlAnGi, av, af, ath, ecker, ES1, CGG, dk,  Huis2, SS, SM} 
and references therein.)

The form of expression \eqref{St} above is a reflection of a large class of symmetries of the mean curvature flow:
\begin{itemize}
\item \eqref{MCF} is invariant under rigid motions of the surface,
i.e. $X\mapsto RX+a$, where $R\in O(n+1)$, $a\in \R^{n+1}$ and
$X=X(u,t)$ is a parametrization of $S_t$, is a symmetry of
\eqref{MCF}.
\item \eqref{MCF} is invariant under the scaling $X\mapsto \lambda
X$ and $t\mapsto \lambda^{-2} t$ for any $\lambda>0$.
\end{itemize}
Our approach utilizes these symmetries in an essential way. It uses the rescaling of the equation \eqref{MCF} by a parameter $\lambda(t)$ whose behaviour is determined by the equation itself and a series of differential inequalities for a Lyapunov-type functions.

\begin{remark} If the initial condition $x_0$ is invariant under the transformation $x_i\rightarrow -x_i$
for any $i=1, \cdots, n+1$, then $z(t)=0$ and the proof below simplifies considerately.
\end{remark}

This paper is organized as follows. We
rescale the equation \eqref{MCF} in Section \ref{sec:rescSurf} by introducing collapse variables,
designed so that the new equation has global solutions 
and reformulate the main theorem in terms of the rescaled surfaces. 
In the same section we present the equation for the surface as a normal graph over a sphere. This equation is derived in Appendix A. In Section \ref{sec:centerz} we introduce a notion of the 'center' of a surface,
close to a unit sphere in $\R^{n+1}$ and show that such a center exists.
We will show in Section \ref{sec:mainproof} that the centers $z(t)$ of the solutions to \eqref{MCF} converge
to the collapse point, $z_*$, of Theorem \ref{mainthm}. 
In Section \ref{sec:reparametrization} we reparametrize the solutions of the new equation by
isolating the leading term and a perturbation. In Section
\ref{sec:decomposition} we use the Lyapunov-Schmidt type decomposition to derive equations for the parameters and perturbation. In Section
\ref{sec:linearopr} we discuss the spectrum of the linearized equation. In Section \ref{sec:functional} we introduce certain Lyapunov functionals and derive differential
inequalities for them. These inequalities are used in Section
\ref{sec:mainproof} to derive a priori bounds on Sobolev norms of the perturbation. In Section \ref{sec:mainproof} we prove the main theorem.

\textbf{Notation.} The relation $f \lesssim g$ for positive functions $f$ and $g$ signifies that there is a numerical constant $C$, s.t. $f \le Cg$.

\section*{Acknowledgements}
The second author is grateful to D. Knopf for useful discussions. A part of this work
was done while the second author was at IAS and while visiting ETH Z\"urich and
Edwin Schr\"odinger Institute, Vienna.

\section{Rescaled equation}\label{sec:rescSurf}
Instead of the surface $M_t$, it is convenient to consider the new, rescaled surface $\tilde M_\tau =\lambda^{-1}(t)(M_t-z(t))$, where $\lambda(t)$ and $z(t)$ are some differentiable functions to be determined later, and $ \tau=\int_0^t \lambda^{-2}(s) ds$. The new surface is described by $y$, which is, say, an immersion of some fixed
$n-$dimensional hypersurface $\Omega\subset\mathbb{R}^{n+1}$, i.e.
$y(\cdot, \tau):\Omega\rightarrow \mathbb{R}^{n+1}$, (or a local parametrization of $\tilde M_\tau$, i.e. $y(\cdot, \tau):U\rightarrow M_t$). Thus the new
collapse variables are given by
\begin{equation} \label{collapsevar}
y(\omega,\tau)=\lambda^{-1}(t)(X(\omega, t)-z(t))
\ \text{and} \ \tau=\int_0^t \lambda^{-2}(s) ds.
\end{equation}
Let $\dot{\lambda}=\frac{\p \lambda}{\p t}$ and $\frac{\p z}{\p \tau}$ be the $\tau$-derivative of $z(t(\tau))$, where $t(\tau)$ is the inverse function of $\tau(t)=\int_0^t \lambda^{-2}(s) ds$. Using that $\frac{\p X}{\p t}=\frac{\p z}{\p t}+
\dot{\lambda}y+\lambda\frac{\p y}{\p \tau}\frac{\p \tau}{\p t}=\lambda^{-2}\frac{\p z}{\p \tau}+\dot{\lambda}y+\lambda^{-1}\frac{\p y}{\p \tau}$
and $H(\lambda y)=\lambda^{-1}H(y)$, we obtain from
\eqref{MCF} the equation for $y$, $\lambda$ and $z$:
\begin{equation}\label{eqn:y}
\frac{\p y}{\p \tau}=-H(y)\nu(y)+ay-\lambda^{-1}\frac{\p z}{\p \tau}\ \text{and}\ a=-\lambda\dot{\lambda}.
\end{equation}

In what follows we take $\Omega$ to be $S^n$, the unit sphere centered at the origin.
In this case, the equation  \eqref{eqn:y} has static solutions ($a=$  a positive constant, $z=0$, $y(\omega)=\sqrt{\frac{n}{a}}\omega$).

Standard results on the local well-posedness for the mean curvature flow (see e.g. \cite{Lieberman}, Theorem 8.3, and also \cite{Eidelman, Friedman}) imply that for an initial condition $y_0 \in C^\alpha,\ \alpha >1,$ and given functions $a(\tau),\ z(\tau) \in C^1 \cap L^\infty (\R)$, there is $T>0$, s.t. \eqref{eqn:y} has a unique solution, $y \in C^\alpha$, on the time interval $[0, T)$ and either $T=\infty$ or $T< \infty$ and $\|y\|_{C^\alpha} \ra \infty$ and $\tau \ra T$. Here $C^\alpha$ is the space of $[\alpha] (=$ the integer part of $\alpha$) times differentiable functions on $S^n$, 
whose highest derivatives are H\"older continuous with the index $\alpha-[\alpha]$. This result extends also to $ H^s(S^n) $ with $s>\frac{n}{2}+1$.

\DETAILS{\section{Graph representation of $S_t$}\label{sec:graph}

It is convenient to reparametrize the surface $S_t$ as a graph over $n+2$ functions,  $\rho(\cdot, t): \Omega \ra \R^+$ and $z \in \R^{n+1}$, as follows.
Let $S^n$ be the unit sphere centered at the origin. We say that a hypersurface $S$ is a $(\rho, z)-$graph in normal direction over $\Omega$, and  denote
$S=graph(\rho,z)$, if it can written in the form  
%
\begin{equation}\label{Srhograph}
S=\{z+ \rho(\omega)\omega\ |\omega \in \Omega\}.
\end{equation}
(In this case, $x(\omega)=z+ \rho(\omega)\omega$ is an immersion of $\Omega$.)}

Our goal is to prove the following result.
\begin{thm}\label{thmrho}
Let $\rho_0\in H^s(S^n) $
satisfy $\|\rho_0-\1\|_{H^s}\ll 1$ for some
$s>\frac{n}{2}+1$, and let $\lambda_0>0$ and $|z_0|\ll 1$. Then \eqref{eqn:y} with initial data
$(y_0=\rho_0(\omega)\omega, \lambda_0, z_0)$ has a unique solution $(y, \lambda, z)$ for $\forall \tau$, with $y(\omega,\tau)$ 
of the form $y(\omega,\tau)=\rho(\omega,\tau)\omega$, with $\rho(\cdot, \tau)\in H^s(S^n),\ \rho(\omega,\tau)=\sqrt{\frac{n}{a(\tau)}}+\xi(\omega,\tau)$, and $\lambda(\tau)$ and $z(\tau)$ satisfying
$\lambda(\tau)=\lambda_0 e^{-\int_0^\tau a(s)ds}$ and $|z(\tau)-z_*| \lesssim e^{-(n+\frac{1}{2}-\frac{1}{2n})\tau}$,
for some $a(\tau)=a_*+O(e^{-(1-\frac{1}{2n})\tau})$, with $|a_*-n|\leq \frac{1}{2}$ and $|z_*|\ll 1$.
\end{thm}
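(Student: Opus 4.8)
The plan is to prove Theorem~\ref{thmrho} by a modulation/Lyapunov--Schmidt reduction of \eqref{eqn:y} followed by a continuity (bootstrap) argument on a hierarchy of Lyapunov-type functionals, exploiting that the scaling and translation symmetries of \eqref{MCF} have been built into \eqref{eqn:y}. First I would pass to the radial-graph representation $y(\w,\tau)=\rho(\w,\tau)\w$ over $\Om=S^n$, which turns the first equation in \eqref{eqn:y} into a single quasilinear parabolic equation for $\rho$ on $S^n$ (the equation of Appendix~A), coupled to $\partial_\tau\lam=-a\lam$ and an equation for $z$; since $s>\tfrac n2+1$, $H^s(S^n)$ is a Banach algebra embedded in $C^1$, so this is locally well posed in $H^s$ and persists as long as $\rho>0$ and $\|\rho(\cdot,\tau)-\1\|_{H^s}$ stays bounded, with static solution the constant $\rho\equiv\sqrt{n/a}$.

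Next (Sections~\ref{sec:centerz}--\ref{sec:decomposition}) I would introduce the parameters $a(\tau)>0$, $z(\tau)\in\R^{n+1}$ and write $\rho(\w,\tau)=\sqrt{n/a(\tau)}+\xi(\w,\tau)$, imposing the $n+2$ conditions that $\xi(\cdot,\tau)$ be $L^2(S^n)$-orthogonal to the constants and to the first spherical harmonics $\w_1,\dots,\w_{n+1}$; by the implicit function theorem these uniquely fix $a(\tau)$ and $z(\tau)$ for $\rho$ near $\1$ --- this is the ``center'' of a near-sphere produced in Section~\ref{sec:centerz}, and it is propagated by the flow. Projecting the $\rho$-equation onto the resonant modes gives modulation ODEs schematically of the form $\dot a=\O{\|\xi\|^2+\cdots}$ and $\partial_\tau z=\O{\lam\|\xi\|+\cdots}$, and projecting onto the complement gives $\partial_\tau\xi=L(a)\xi+N(\xi,a,z,\lam)$, with $L(a)$ the linearization at $\sqrt{n/a}$ and $N$ collecting quadratic-and-higher terms and the modulation remainders. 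The one delicate point here is the singular factor $\lam^{-1}$ multiplying $\partial_\tau z$ in \eqref{eqn:y}: it is harmless only because $\partial_\tau z$ itself carries a compensating factor $\lam$.

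Then (Sections~\ref{sec:linearopr}--\ref{sec:mainproof}) I would use that on $\{1,\w_1,\dots,\w_{n+1}\}^\perp$ the operator $L(a)=\tfrac an(\Delta_{S^n}+n)+a$ acts on the $\ell$-th spherical-harmonic block as multiplication by $\tfrac an\bigl(2n-\ell(\ell+n-1)\bigr)$, strictly negative for all $\ell\ge2$, so that $-\ip{L(a)\xi}{\xi}\gtrsim\|\xi\|_{H^1}^2$ there. Since $a=a(\tau)$ drifts, rather than the semigroup $e^{\tau L}$ one uses the Lyapunov functionals of Section~\ref{sec:functional} built from $\|\xi\|_{H^s}^2$ and lower Sobolev norms; coercivity of $L$ together with commutator and algebra estimates (valid as $s>\tfrac n2+1$) yields $\tfrac{d}{d\tau}\|\xi\|_{H^s}^2\le -c\|\xi\|_{H^s}^2+(\text{controlled lower-order and modulation terms})$. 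A continuity argument then closes: on the maximal interval where the a priori bounds hold --- $\|\xi\|_{H^s}$ small and decaying at the asserted rate, $|a-n|\le\tfrac12$, $z$ within the asserted distance of a limit, and $\lam=\lam_0 e^{-\int_0^\tau a}$ --- these differential inequalities and the integrated modulation ODEs strictly improve the constants, forcing the interval to be $[0,\infty)$ and (via boundedness of $\|\rho-\1\|_{H^s}$) the solution to be global. Integrating $\dot a$, $\partial_\tau z$ over $[\tau,\infty)$ produces $a_*=\lim a(\tau)$, $z_*=\lim z(\tau)$ with rates $|a(\tau)-a_*|\lesssim e^{-(1-\frac1{2n})\tau}$, $|z(\tau)-z_*|\lesssim e^{-(n+\frac12-\frac1{2n})\tau}$; $\lam(\tau)=\lam_0 e^{-\int_0^\tau a}$ is the explicit solution of $\partial_\tau\lam=-a\lam$ (equivalent to $a=-\lam\dot\lam$ through $d\tau/dt=\lam^{-2}$); and $|a_*-n|\le\tfrac12$, $|z_*|\ll1$ hold since $(a,z)$ start $\O{\|\rho_0-\1\|_{H^s}}$-close to $(n,0)$ and move only a little.

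The main obstacle --- where most of the work lies --- is closing the bootstrap with mutually compatible exponential rates: $\|\xi\|$ sources $\dot a$, hence the drift of both the base point $\sqrt{n/a}$ and the operator $L(a)$; $a-a_*$ and $\lam$ feed back into the $\xi$-equation; and the singular term $\lam^{-1}\partial_\tau z$ couples the translation modulation into all of this. Obtaining the sharp exponents in the statement therefore requires a carefully balanced system of differential inequalities rather than a single energy estimate, and the quasilinear (not semilinear) structure --- top-order coefficients depending on $\nabla\rho$ --- forces the $H^s$ estimates to be run through commutators and the algebra property of $H^s$, $s>\tfrac n2+1$.
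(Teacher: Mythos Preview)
Your proposal is correct and follows essentially the same route as the paper: radial-graph reduction, Lyapunov--Schmidt splitting with $\xi\perp\{1,\w_1,\dots,\w_{n+1}\}$, modulation equations for $(a,z)$, spectral gap of $L_a$ on the complement, and a bootstrap closed via a Lyapunov functional. The only minor discrepancy is that the paper does not need the ``carefully balanced system of differential inequalities'' you anticipate at the end---a single functional $\Lambda_k(\xi)=\tfrac12\langle\xi,L_a^k\xi\rangle$ satisfies $\partial_\tau\Lambda_k\le -\tfrac{a}{n}\Lambda_k$ under the bootstrap, and the stated rates for $a$ and $z$ then follow by direct integration using $a\ge n-\tfrac12$ and $\lambda\lesssim e^{-(n-\frac12)\tau}$.
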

This Theorem together with \eqref{collapsevar} implies Theorem \ref{mainthm} (see Section \ref{sec:mainproof}).

In what follows  $g_{ij}$ is the standard metric on $S^n$  (induced by the inner product in $\R^{n+1}$) 
and $\Delta$ 
is the Laplace-Beltrami operator in this metric. 
Furthermore we define $(\Hess\rho)_{ij}=\frac{\p^2\rho}{\p u^iu^j}-\Gamma_{ij}^k\frac{\p \rho}{\p u^k}$,
$\Gamma_{ij}^k=\frac{1}{2}g^{kn}(\frac{\p g_{in}}{\p u^j}+\frac{\p g_{jn}}{\p u^i}-
\frac{\p g_{ij}}{\p u^n})$, and 
$\nabla^k\rho= g^{km} \frac{\p \rho}{\p u^m}$ in a local
parametrization $x=x(u)$ of $S^n$ (so that  $g_{ij}:=\frac{\p x^{k}}{\p
u^i}\frac{\p x^{k}}{\p u^j}$). Here and in what follows the summation over the repeated
indices is assumed. (Note that $(Hess)_{ij}=\nabla_i\nabla_j$, where $\nabla_i\rho=\frac{\p \rho}{\p u^i}$ and $(\nabla_i\omega)_j=\frac{\p\omega_j}{\p u^i}-\Gamma_{ij}^k\omega_{k}$.) In the appendix we prove the following

\begin{prop}\label{prop:eqnrho}
Let $\tilde M_\tau =\lambda^{-1}(t)(M_t-z(t))$ 
be defined by an immersion $y(\omega, \tau)=\rho(\omega, \tau)\omega$ of $S^n$ for some 
functions $\rho(\cdot, \tau): S^n\rightarrow\R^+$, differentiable in their arguments and let $z(\tau)\in C^1(\R^+, \R^{n+1})$. Then
$\tilde M_\tau$ satisfies \eqref{eqn:y} if and only if $\rho$ and $z$ satisfy the
equation
\begin{equation}\label{eqnrho}
\frac{\p \rho}{\p \tau}=G(\rho)+a\rho-\lambda^{-1}z_{\tau}\cdot\omega+\lambda^{-1}\tilde z_\tau\cdot\frac{\nabla\rho}{\rho},
\end{equation}
where ${\tilde{z}}_{\tau k}=\frac{\p x^i}{\p u^k}z_\tau^i,\ z_\tau:= \frac{\p z}{\p \tau}$ and
\begin{equation}\label{G}
G(\rho)=\frac{1}{\rho^2}\Delta
\rho-\frac{n}{\rho}-\frac{\nabla\rho \cdot \Hess(\rho)\nabla\rho}{\rho^2(\rho^2+|\nabla\rho|^2)}
+\frac{|\nabla\rho|^2}{\rho(\rho^2+|\nabla\rho|^2)}.
\end{equation}
\end{prop}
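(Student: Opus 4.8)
The plan is to exploit that \eqref{eqn:y}, like \eqref{MCF}, is a purely geometric flow: only the normal speed of $\tilde M_\tau$ is prescribed, the tangential part of the velocity being an arbitrary reparametrization. Hence I would \emph{not} try to verify \eqref{eqn:y} as a vector identity for $y(\omega,\tau)=\rho(\omega,\tau)\omega$ (it cannot hold, since $\p_\tau y=\rho_\tau\,\omega$ is radial while $\nu$ and $z_\tau$ are not), but instead equate normal speeds. In the radial parametrization the normal speed of $\tilde M_\tau$ at the point $\rho(\omega,\tau)\,\omega$ is $\p_\tau y\cdot\nu=\rho_\tau\,(\omega\cdot\nu)$; and if $\tilde M_\tau$ solves \eqref{eqn:y}, this same quantity equals $(-H\nu+ay-\lambda^{-1}z_\tau)\cdot\nu=-H+a\rho\,(\omega\cdot\nu)-\lambda^{-1}z_\tau\cdot\nu$ at that point. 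Since $\rho>0$ we may divide by $\omega\cdot\nu$, reaching the skeleton equation $\rho_\tau=-H/(\omega\cdot\nu)+a\rho-\lambda^{-1}\,(z_\tau\cdot\nu)/(\omega\cdot\nu)$. So the statement reduces to (i) computing $\nu$, $\omega\cdot\nu$ and $H$ for a normal graph over $S^n$, and (ii) rewriting the translation term. The converse direction is the same chain read backwards: if $\rho$ and $z$ solve \eqref{eqnrho}, the surface $\{\rho(\omega,\tau)\omega\}$ has exactly the normal speed dictated by \eqref{eqn:y}, and one recovers a parametrization satisfying \eqref{eqn:y} verbatim by adding a tangential vector field solving the associated first-order ODE.

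For (i) I would work in a local patch $x=x(u)$ of $S^n$, writing $\omega=x$, $\omega_i=\p_i x$, $g_{ij}=\omega_i\cdot\omega_j$, with $\Delta$, $\Hess$, $\nabla$, $\Gamma$ all referring to this fixed round metric. From $\p_i y=\rho_i\omega+\rho\,\omega_i$ and $\omega\cdot\omega=1$, $\omega\cdot\omega_i=0$ one gets the induced metric $h_{ij}=\rho_i\rho_j+\rho^2 g_{ij}$, inverted by the rank-one formula $h^{ij}=\rho^{-2}\bigl(g^{ij}-\nabla^i\rho\,\nabla^j\rho/(\rho^2+|\nabla\rho|^2)\bigr)$, together with the outward unit normal $\nu=(\rho\,\omega-\nabla\rho)/\sqrt{\rho^2+|\nabla\rho|^2}$, where here $\nabla\rho:=\nabla^k\rho\,\omega_k$ denotes the gradient viewed as a vector tangent to $S^n$; in particular $\omega\cdot\nu=\rho/\sqrt{\rho^2+|\nabla\rho|^2}$. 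Differentiating once more and using the Gauss relation for the unit sphere, $\omega_{ij}=\Gamma_{ij}^k\omega_k-g_{ij}\,\omega$, yields $\p_i\p_j y\cdot\nu=(\omega\cdot\nu)\bigl((\Hess\rho)_{ij}-\tfrac{2}{\rho}\rho_i\rho_j-\rho\,g_{ij}\bigr)$, with $(\Hess\rho)_{ij}=\rho_{ij}-\Gamma^k_{ij}\rho_k$ exactly as in the paper. Then $H=-h^{ij}(\p_i\p_j y\cdot\nu)$ — the overall sign being fixed by the requirement $H=n$ on the unit sphere, consistent with \eqref{MCF} and the rate $\sqrt{2n(t_*-t)}$ — and inserting $h^{ij}$ reduces everything to the three elementary contractions $h^{ij}(\Hess\rho)_{ij}=\rho^{-2}\Delta\rho-\nabla\rho\cdot\Hess(\rho)\nabla\rho/(\rho^2(\rho^2+|\nabla\rho|^2))$, $\;h^{ij}\rho_i\rho_j=|\nabla\rho|^2/(\rho^2+|\nabla\rho|^2)$, and $h^{ij}g_{ij}=n\rho^{-2}-|\nabla\rho|^2/(\rho^2(\rho^2+|\nabla\rho|^2))$; combining them gives the expression \eqref{G} for $G(\rho)=-H/(\omega\cdot\nu)$.

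For (ii), since $\omega\cdot\nabla\rho=0$ we have $(z_\tau\cdot\nu)/(\omega\cdot\nu)=z_\tau\cdot\omega-\rho^{-1}\,z_\tau\cdot\nabla\rho$, and writing $\nabla\rho=\nabla^k\rho\,\omega_k$ with $\omega_k=\p_k x$ gives $z_\tau\cdot\nabla\rho=\nabla^k\rho\,(\p_k x^i)\,z_\tau^i=\tilde z_{\tau k}\nabla^k\rho$, i.e. $z_\tau\cdot\nabla\rho=\tilde z_\tau\cdot\nabla\rho$ in the paper's notation. Substituting (i) and (ii) into the skeleton equation turns it into exactly \eqref{eqnrho}, and this completes both directions.

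The conceptual part is short; the real labour — and essentially the only place to go wrong — is the fundamental-form bookkeeping of the second paragraph: keeping track of which indices are raised or lowered with the round metric $g$ versus the induced metric $h$, of the Christoffel terms produced by the curvature of the $S^n$ background, and, above all, of the sign conventions linking the sign of $H$ in \eqref{MCF}, the orientation of $\nu$, and the definition of $\Hess$ so that they are mutually consistent. I would pin all of these down by testing the resulting $G$ against two cases where $H$ is known in advance: constant $\rho$, which must give $G\equiv-n/\rho$ (and hence the static solutions $\rho=\sqrt{n/a}$ of \eqref{eqn:y}), and a round sphere that is \emph{not} centred at the origin, written as a radial graph over $S^n$, whose mean curvature is a known constant and for which $-H/(\omega\cdot\nu)$ must reproduce \eqref{G}.
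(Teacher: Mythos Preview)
Your proposal is correct and, at the top level, matches the paper: both take the normal component of \eqref{eqn:y}, obtaining $\rho_\tau=-H/(\omega\cdot\nu)+a\rho-\lambda^{-1}(z_\tau\cdot\nu)/(\omega\cdot\nu)$, and then identify $-H/(\omega\cdot\nu)$ with $G(\rho)$ and rewrite the translation term. Where you genuinely diverge is in the computation of $H$ and $\nu$ for the radial graph. The paper works \emph{extrinsically}: it extends $\rho$ to a degree-zero homogeneous function $\tilde\rho(x)=\rho(x/|x|)$ on $\R^{n+1}\setminus\{0\}$, writes $\tilde M_\tau$ as the level set $\{\varphi=0\}$ with $\varphi=|x|-\tilde\rho$, computes $\tilde H=\divv_x(\nabla_x\varphi/|\nabla_x\varphi|)$ using Euler's identity $x\cdot\nabla_x\tilde\rho=0$, and then proves a separate lemma converting the Euclidean quantities $|\nabla_x\tilde\rho|^2$, $\Delta_x\tilde\rho$ and $\nabla_x\tilde\rho\cdot\Hess_x(\tilde\rho)\nabla_x\tilde\rho$ into the intrinsic $|\nabla\rho|^2$, $\Delta\rho$ and $\nabla\rho\cdot\Hess(\rho)\nabla\rho$ on $S^n$; most of the labour is in that chain-rule lemma. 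Your route is \emph{intrinsic} from the outset: you compute the induced metric $h_{ij}=\rho_i\rho_j+\rho^2 g_{ij}$ and the second fundamental form directly in a chart on $S^n$ via the Gauss relation $\omega_{ij}=\Gamma^k_{ij}\omega_k-g_{ij}\omega$, invert $h$ by the rank-one formula, and contract. Your approach is shorter and avoids both the homogeneous extension and the translation lemma, at the cost of having to track Christoffel symbols by hand; the paper's level-set route keeps the mean-curvature computation coordinate-free in $\R^{n+1}$ and confines the chart work to one lemma. Either way the translation term comes out the same, since your $z_\tau\cdot\nabla\rho/\rho$ and the paper's $z_\tau\cdot\nabla_x\tilde\rho$ (evaluated on $\tilde M_\tau$) both reduce to $\tilde z_\tau\cdot\nabla\rho/\rho$.
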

\section{Collapse center} 
\label{sec:centerz}

In this section we introduce a notion of the 'center' of a surface, close to a unit sphere, $S^n$, in $\R^{n+1}$, and show that such a center exists. We will show in Section \ref{sec:mainproof} that the centers $z(t)$ of the solutions $M_t$ to \eqref{MCF} converge to the collapse point, $z_*$, of Theorem \ref{mainthm}. For a closed surface $S$, given by an immersion $x: S^n\ra\R^{n+1}$, we define the center, $z$, by the relations $\int_{S^n} ((x-z)\cdot \omega) \omega^j=0,\  j=1, \dots, n+1$. The reason for this definition will become clear in Section \ref{sec:linearopr}. We have

\begin{prop} \label{prop:centerz}
Assume a surface $M$ is given by an immersion $x: S^n \ra \R^{n+1}$, with $y:=\lambda^{-1}(x-\bar z)\in H^1(S^n, \R^{n+1})$ close, in the $H^1(S^n, \R^{n+1})$-norm,  to the identity $\1$, for some $\lambda \in \R^+$ and $\bar z\in \R^{n+1}$. 
Then there
exists  $z\in \R^{n+1}$ such that $\int_{S^n} ((x-z)\cdot \omega) \omega^j=0,\  j=1, \dots, n+1$.
\end{prop}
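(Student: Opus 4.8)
The plan is to set up an implicit-function/fixed-point argument for the map $z \mapsto F(z) \in \R^{n+1}$, where $F^j(z) := \int_{S^n}\big((x-z)\cdot\omega\big)\,\omega^j\,d\omega$, and to solve $F(z)=0$ near the natural candidate $z=\bar z$. First I would compute $F(\bar z)$: writing $x = \bar z + \lambda y$ with $y$ close to $\1$ in $H^1$ (hence in $L^2$, and pointwise by Sobolev embedding if the index allowed, though $L^2$ suffices here), we get $F^j(\bar z) = \lambda\int_{S^n}(y\cdot\omega)\,\omega^j$. Since $\int_{S^n}(\1\cdot\omega)\,\omega^j = \int_{S^n}\omega^i\omega^j = \frac{|S^n|}{n+1}\delta^{ij}$ is the "perfect sphere" value, and we expect the center of the unit sphere itself to be the origin, the relevant quantity is $F^j(\bar z)$ measured against the linear-in-$z$ term. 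Concretely, $\partial_{z^k}F^j(z) = -\int_{S^n}\omega^k\omega^j\,d\omega = -\frac{|S^n|}{n+1}\delta^{jk}$, which is a fixed invertible (scalar) matrix, independent of $z$. So $F$ is actually an affine function of $z$: $F(z) = F(0) - \frac{|S^n|}{n+1} z$, and the equation $F(z)=0$ has the unique explicit solution $z = \frac{n+1}{|S^n|}F(0) = \frac{n+1}{|S^n|}\int_{S^n}(x\cdot\omega)\,\omega$.

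Given that $F$ is affine with invertible linear part, existence and uniqueness of $z$ are immediate and no fixed-point machinery is really needed — the only thing to check is that $z$ is well-defined, i.e. that the integral $\int_{S^n}(x\cdot\omega)\,\omega$ converges, which follows since $x = \bar z + \lambda y$ with $y \in H^1(S^n,\R^{n+1}) \subset L^2 \subset L^1$ on the compact manifold $S^n$. I would then also record the quantitative statement that will be needed later: since $x - \bar z = \lambda y$ and $y$ is $H^1$-close to $\1$, one has $|z - \bar z| = \frac{n+1}{|S^n|}\big|\int_{S^n}\big((x-\bar z)\cdot\omega\big)\omega\big| = \lambda\cdot\frac{n+1}{|S^n|}\big|\int_{S^n}\big((y-\1)\cdot\omega\big)\omega + \int_{S^n}(\1\cdot\omega)\omega\big|$; the cross term with $\1$ gives a fixed multiple of $\bar z$-independent data, and the remaining term is $O(\lambda\|y-\1\|_{L^2})$, so $z$ is close to the expected location whenever $y$ is close to $\1$.

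There is essentially no hard step here — the proposition is a soft existence statement, and the "closeness to $\1$" hypothesis is used only to guarantee the integrand is integrable and to get the quantitative smallness; the affine structure makes invertibility trivial because $\big(\int_{S^n}\omega^i\omega^j\big)$ is a positive multiple of the identity by symmetry of $S^n$. If anything, the only point requiring a word of care is justifying differentiation under the integral sign (or, equivalently, just expanding the square/linear term directly to see affineness without invoking the implicit function theorem at all), and confirming that the hypothesis $y\in H^1$ — rather than merely $L^2$ — is not actually needed for this particular statement, though it is consistent with its later use. I would therefore present the proof in the direct form: define $z$ by the explicit formula, verify it is well-defined under the stated hypothesis, and check by substitution that it satisfies $\int_{S^n}\big((x-z)\cdot\omega\big)\omega^j = 0$ for each $j$, noting uniqueness from the invertibility of $\frac{|S^n|}{n+1}I$.
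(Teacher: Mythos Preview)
Your proof is correct and, in fact, cleaner than the paper's. The paper sets up the same map $F_j(x,z)=\int_{S^n}((x-z)\cdot\omega)\omega^j$, checks that $F(\1,0)=0$, computes $\partial_{z^i}F_j=-\frac{|S^n|}{n+1}\delta_{ij}$, and then invokes the implicit function theorem in the pair $(x,z)$ near $(\1,0)$. You instead observe that for each fixed $x$ the map $z\mapsto F(z)$ is affine with invertible linear part $-\frac{|S^n|}{n+1}I$, so one can write down the unique solution $z=\frac{n+1}{|S^n|}\int_{S^n}(x\cdot\omega)\,\omega$ directly, bypassing the IFT entirely.

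What you gain is an explicit formula and the observation that neither the closeness hypothesis nor the $H^1$ regularity is genuinely needed for existence---$x\in L^1(S^n)$ suffices. What the paper's IFT formulation buys is uniformity of presentation with Propositions~\ref{prop:decomposition} and~\ref{prop:timedecom}, where the equations really are nonlinear in the parameters and the IFT is doing actual work. One small remark on your quantitative paragraph: the term you call ``the cross term with $\1$'' is $\int_{S^n}(\1\cdot\omega)\,\omega=\int_{S^n}\omega=0$ by symmetry, so you may as well say it vanishes; this then gives the clean bound $|z-\bar z|\lesssim\lambda\|y-\1\|_{L^2}$ directly.
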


\begin{proof}  By replacing $x$ by $x^{new}$, if necessary, we may assume that $\bar z=0$ and $\lambda=1$.
Let $x \in H^1(S^n, \R^{n+1})$.
The relations  $\int_{S^n} ((x-z)\cdot \omega) \omega^j=0 \ \forall j$
are equivalent to the equation $F(x, z)=0$, where $F(x, z)=(F_1(x, z), \dots, F_{n+1}(x, z))$, with
$$F_j(x, z) =\int_{S^n} ((x-z)\cdot \omega) \omega^j,\ j=1, \dots, n+1.$$
Clearly  $F$ is a $C^1$ map
from $H^1(S^n, \R^{n+1})\times \R^{n+1}$ to $\R^{n+1}$.
We notice that $F(\1, 0)=0$.  We solve the equation $F(x, z)=0$ near $(\1, 0)$, using the implicit function theorem.
To this end we calculate the derivatives 
$\p_{z^i}F_j=-\int_{S^n}\omega^i\omega^j
=-\frac{1}{n+1}\delta_{ij}|S^n|$ for $j=1, \cdots, n+1$.
The above relations allow us to apply implicit function theorem to show that for any $x$ close to $\1$, there exists $z$, close to $0$, such that $F(x, z)=0$.
 \end{proof}

 Assume we have a family, $x(\cdot, t): S^n \ra \R^{n+1},\ t\in [0, T]$, of immersions and functions  $\bar z(t)\in \R^{n+1}$ and $\lambda (t) \in \R^+$, s.t. $\lambda^{-1}( t)(x(\omega, t)- \bar z(t))$, in the $H^1(S^n, \R^{n+1})$-norm,  to the identity $\1$ (i.e. a unit sphere).  Then Proposition \ref{prop:centerz} implies that there exists $z(t) \in \R^{n+1}$, s.t.
 \begin{equation}\label{orthog'}
\int_{S^n} ((x(\omega, t)- z( t))\cdot \omega) \omega^j=0,\  j=1, \dots, n+1.
\end{equation}
 Furthermore, 
if $y(\omega, \tau):=\lambda^{-1}( t)(x(\omega, t)- z( t))= \rho(\omega, \tau)\omega$, where $\tau=\tau (t)$ is given in \eqref{collapsevar}, then we conclude that
\begin{equation} \label{rhoomegajorthog}
\int_{S^n} \rho(\omega, \tau) \omega^j=0,\  j=1, \dots, n+1.
\end{equation}

To apply the above result to the 
immersion $x(\cdot, t): S^n \ra \R^{n+1}$, solving \eqref{MCF}, we pick $\bar z (t)$ to be a piecewise constant function
constructed iteratively, starting with $\bar z (t)=0$ for $0\le t \le \delta$ for $\delta$ sufficiently small (this works due to our assumption on the initial conditions), and $\bar z (t)=z(\delta)$ for $\delta \le t \le \delta+\delta'$ and so forth (see Section \ref{sec:mainproof}). This gives $z(t) \in \R^{n+1}$, s.t. \eqref{orthog'} holds.
This is $z(t)$ we use in \eqref{collapsevar}.

\section{Reparametrization of solutions}\label{sec:reparametrization}

The next proposition will be used to reparamtrize the initial condition for \eqref{eqnrho}.
\begin{prop}\label{prop:decomposition}
If $\|\rho-\sqrt{\frac{n}{a_\#}}\|_{L^1}\leq \delta :=\frac{(n-\frac{1}{2})^{5/2}\sqrt{n}|S^n|}{6a_\#^{3}}$ for some
$n-\frac{1}{2}<a_\#<n+\frac{1}{2}$, then there exists $a=a(\rho)$ s.t.
\begin{equation} \label{vorthof1}
\rho-\sqrt{\frac{n}{a}}\perp 1\ \text{in}\  L^2(S^n).
\end{equation}
Moreover, $|a(\rho)-a_\#|\lesssim \|\rho-\sqrt{\frac{n}{a_\#}}\|_{L^1}$ and
$\|\rho-\sqrt{\frac{n}{a}}\|_{H^s}\lesssim \|\rho-\sqrt{\frac{n}{a_\#}}\|_{H^s}$.
\end{prop}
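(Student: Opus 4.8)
The plan is to observe that the orthogonality condition \eqref{vorthof1} determines $a$ \emph{explicitly}. Write $\bar\rho:=\tfrac{1}{|S^n|}\int_{S^n}\rho$ for the average of $\rho$ over $S^n$. Then $\rho-\sqrt{n/a}\perp 1$ in $L^2(S^n)$ is equivalent to $\int_{S^n}\big(\rho-\sqrt{n/a}\big)=0$, i.e.\ to $\sqrt{n/a}=\bar\rho$; hence, provided $\bar\rho>0$, there is a \emph{unique} admissible value $a=a(\rho):=n/\bar\rho^{\,2}$. The statement thus reduces to (i) showing that $\bar\rho$ is bounded above and below by positive constants (so that $a(\rho)$ is well defined and confined to a fixed compact subinterval of $(0,\infty)$), and (ii) the two quantitative bounds.

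For (i), set $m_\#:=\sqrt{n/a_\#}$, so that $|\bar\rho-m_\#|\le\tfrac{1}{|S^n|}\int_{S^n}|\rho-m_\#|=\tfrac{1}{|S^n|}\|\rho-\sqrt{n/a_\#}\|_{L^1}\le\delta/|S^n|$. Since $a_\#\in(n-\tfrac12,n+\tfrac12)$ one has $m_\#\in\big(\sqrt{n/(n+\tfrac12)},\sqrt{n/(n-\tfrac12)}\big)$, and a one-line estimate (using $a_\#>n-\tfrac12$ in the numerator of $\delta$ and $a_\#<n+\tfrac12$ in the lower bound for $m_\#$) shows that the prescribed $\delta=\tfrac{(n-1/2)^{5/2}\sqrt n\,|S^n|}{6a_\#^{3}}$ satisfies $\delta/|S^n|<\tfrac12 m_\#$ for every $n\ge1$ and every admissible $a_\#$ — it reduces to $n+\tfrac12\le 9(n-\tfrac12)$. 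Hence $\tfrac12 m_\#<\bar\rho<\tfrac32 m_\#$, so $a(\rho)=n/\bar\rho^{2}\in\big(\tfrac49 a_\#,\,4a_\#\big)$ is well defined, positive, and confined to a fixed range.

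For (ii), the bound on $a$ is the Lipschitz estimate for $m\mapsto n/m^{2}$ on $[\tfrac12 m_\#,\tfrac32 m_\#]$: $|a(\rho)-a_\#|=|n/\bar\rho^{2}-n/m_\#^{2}|\lesssim|\bar\rho-m_\#|\le\tfrac1{|S^n|}\|\rho-\sqrt{n/a_\#}\|_{L^1}\lesssim\|\rho-\sqrt{n/a_\#}\|_{L^1}$, the constant depending only on $n$. For the Sobolev bound, write
\[
\rho-\sqrt{\tfrac n a}=\Big(\rho-\sqrt{\tfrac n{a_\#}}\Big)+\Big(\sqrt{\tfrac n{a_\#}}-\sqrt{\tfrac n a}\Big),
\]
where the second summand is the constant function equal to $m_\#-\bar\rho$. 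Its $H^s$ norm is $|m_\#-\bar\rho|\,\|\1\|_{H^s}=|m_\#-\bar\rho|\,|S^n|^{1/2}$ (the derivatives of a constant vanish), and from $|m_\#-\bar\rho|\le\tfrac1{|S^n|}\|\rho-\sqrt{n/a_\#}\|_{L^1}\le|S^n|^{-1/2}\|\rho-\sqrt{n/a_\#}\|_{L^2}\le|S^n|^{-1/2}\|\rho-\sqrt{n/a_\#}\|_{H^s}$ (Cauchy--Schwarz, then $L^2\hookrightarrow H^s$ for $s\ge0$) this term is bounded by $\|\rho-\sqrt{n/a_\#}\|_{H^s}$; the triangle inequality then yields $\|\rho-\sqrt{n/a}\|_{H^s}\le 2\,\|\rho-\sqrt{n/a_\#}\|_{H^s}$.

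The argument has no genuine obstacle: the orthogonality condition solves the parameter in closed form, and both estimates are elementary Lipschitz/embedding bounds. The one point that needs attention is the verification in step (i) that the specific constant entering $\delta$ keeps $\bar\rho$ — equivalently $a(\rho)$ — inside a region on which $m\mapsto n/m^{2}$ is uniformly Lipschitz, which is precisely what that constant is engineered to guarantee.
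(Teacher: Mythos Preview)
Your argument is correct, and it is genuinely different from the paper's. The paper treats the equation $F(\rho,a):=\int_{S^n}(\rho-\sqrt{n/a})=0$ by a quantitative implicit function theorem: it computes $\partial_a F=\tfrac{\sqrt n}{2}a^{-3/2}|S^n|$, Taylor-expands $F(\rho,\cdot)$ about $a_\#$, bounds the second-order remainder using $\sup|\partial_a^2F|$, and runs a contraction on the map $\Phi_\rho(a-a_\#)=-(\partial_aF)^{-1}[F(\rho,a_\#)+R(\rho,a)]$ on a ball of radius $r\sim\delta^{1/2}$. You instead observe that $F(\rho,a)=0$ is solvable \emph{in closed form}: $a(\rho)=n/\bar\rho^{\,2}$. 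That bypasses the fixed-point machinery entirely and makes the two quantitative estimates immediate (Lipschitz continuity of $m\mapsto n/m^2$ on a fixed interval, plus a triangle inequality with the constant shift). Your proof is shorter and also more explicit about the $|a-a_\#|\lesssim\|\rho-\sqrt{n/a_\#}\|_{L^1}$ and $H^s$ bounds, which the paper's write-up leaves implicit. The paper's approach has the virtue of being a template that carries over verbatim to the time-dependent version (Proposition~\ref{prop:timedecom}), where the dependence of $\lambda(a,\lambda_0)$ on $a$ is genuinely implicit; for the present statement, however, your direct computation is the cleaner route. One cosmetic slip: you wrote ``$L^2\hookrightarrow H^s$'' where you mean the inequality $\|\cdot\|_{L^2}\le\|\cdot\|_{H^s}$, i.e.\ $H^s\hookrightarrow L^2$; the estimate you actually use is the correct one.
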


\begin{proof}
The orthogonality conditions on the fluctuation can be written as
$F(\rho,a)=0$, where $F:
L^2(\Omega)\times\R^{+}\rightarrow\R$ is defined as
$F(\rho,a)=\int_{S^n} (\rho-\sqrt{\frac{n}{a}})$. 

Note first that the mapping $F$ is $C^1$ and
$F(\sqrt{\frac{n}{a}}, a)=0\ \forall a$. We compute the linear map $\p_a F(\rho, a)$: 
\begin{equation} \label{F'}
(\p_a F)(\rho, a)=\int_{S^n} \frac{\sqrt{n}}{2}a^{-3/2} 
=\frac{\sqrt{n}}{2a^{3/2}}|\Omega|.
\end{equation}
Hence $\p_a F(\rho, a)$ is
invertible. Thus, by implicit function theorem, the equation $F(\rho,a)=0$ has a unique solution for $a$ in a neighbourhood of the point $(\sqrt{\frac{n}{a_\#}}, a_\#)$.

To obtain estimates on the neighbourhood above we follow through the proof of the implicit function theorem. We expand the function $F(\rho, a)$ in $a$ around $a_\#$:
\begin{equation} \label{Fexp}
F(\rho, a)=F(\rho,a_\#)+\p_a F(\rho,a_\#)(a-a_\#)+R(\rho,a),
\end{equation}
where $R(\rho,a)$ is defined by this equation. Hence, by the standard remainder formula, 
it satisfies, for $|a-a_\#|,\ |a'-a_\#|\le r$,
\begin{equation}\label{eqn:estR}
|R(\rho,a)|\le \frac{1}{2}\sup|\p^2_a F| r^2 \le
\frac{3\sqrt{n}|S^n|}{8(n-1/2)^{5/2}}r^2
\end{equation}
 and
\begin{equation}\label{eqn:differenceR}
\begin{array}{lll}
|R(\rho,a')&-R(\rho,a)|=|F(\rho,a')-F(\rho,a)-\p_a F(\rho, a_\sharp)(a'-a)|\\
&=  |\int_0^1 \p_s F(\rho,sa'+(1-s)a)ds-\p_a F(\rho, a_\sharp)(a'-a)|\\
&= \int_0^1 ds |\p_a F(\rho,sa'+(1-s)a)-\p_a F(\rho, a_\sharp) ||a'-a|\\
 &\le \sup|\p_a^2 F(\rho,a)|r|a'-a|\\
 &\le  \frac{3\sqrt{n}|S^n |}{8(n-1/2)^{5/2}}r|a'-a|.
\end{array}
\end{equation}
Using  \eqref{Fexp}, we rewrite the equation $F(\rho,a)=0$ as a fixed point problem
$a-a_\#=\Phi_\rho(a-a_\#)$, where $\Phi_\rho(a-a_\#)=-\p_a F(\rho,a_\#)^{-1}[F(\rho,a_\#)+R(\rho,a)]$.
Choose $r=(\frac{8(n-\frac{1}{2})^{5/2}}{3\sqrt{n}|S^n|}\delta)^{1/2}
=\frac{2(n-\frac{1}{2})^{5/2}}{3a_\#^{3/2}}$. Then if $|a-a_\#|\le r$, we have by \eqref{eqn:estR}
$$|\Phi_\rho(a-a_\#)| \le \frac{2a_\#^{3/2}}{\sqrt{n}|S^n|}(\delta+\frac{3\sqrt{n}|S^n|}{8(n-\frac{1}{2})^{5/2}}r^2)
=\frac{4a_\#^{3/2}}{\sqrt{n}|S^n|}\delta = r.$$
Moreover, by \eqref{eqn:differenceR}
\begin{equation*}
\begin{array}{lll}
|\Phi_\rho(a'-a_\#)-\Phi_\rho(a-a_\#)|&=|\p_a F(\rho,a_\#)^{-1}||R(\rho,a')-R(\rho,a)|\\
&\le  \frac{2a_\#^{3/2}}{\sqrt{n}|S^n|}\frac{3\sqrt{n}|S^n|}{8(n-1/2)^{5/2}}r|a'-a|
= \frac{1}{2}|a'-a|.
\end{array}
\end{equation*}
we conclude that this fixed point problem has a unique solution satisfying the estimates $|a-a_\#|\le r$, provided $ \|\rho-\sqrt{\frac{n}{a_\#}}\|_{L^1} \le \delta$.
\end{proof}

\DETAILS{
\begin{prop}\label{decomposition}
If $\|v-\frac{1}{\sqrt{a_0}}\|\ll 1$ for some
$\frac{1}{2}<a_0<\frac{3}{2}$ in $\R^2$, then there exists $a=a(v)$ so that
$$v-\frac{1}{\sqrt{a}}\perp 1\ in\  L^2(\Gamma).$$
Similarly, if
$\|v-\sqrt{\frac{2}{a_0}}\|\ll 1$ for some
$1<a_0<3$ in $\R^3$, then there exists $a=a(v)$ so that
$$v-\sqrt{\frac{2}{a}}\perp 1\ in\  L^2(\Gamma).$$
Moreover, $|a(v)-a_0|\ll 1$ in both cases.
\end{prop}

\begin{proof}
We will prove the case in $\R^2$. The case in $\R^3$ can be proved
exactly in the same way.
The orthogonality conditions on the fluctuation can be written as
$F(v,a)=0$, where $F:
L^2(\Gamma)\times\R^{+}\rightarrow\R$ is defined as
$F(v,a)=\ip{v-\frac{1}{\sqrt{a}}}{1}$.
Here and in what follows, all inner products are the $L^2$ inner
products.

Note first that the mapping $F$ is $C^1$ and
$F(\frac{1}{\sqrt{a_0}}, a_0)=0$. We claim that the linear map $\p_a
F(\frac{1}{\sqrt{a_0}}, a_0)$ is invertible. Indeed, we compute
\begin{equation*}
\p_a
F(\frac{1}{\sqrt{a_0}},
a_0)=\ip{\frac{1}{2}a^{-3/2}}{1}|_{a=a_0}=
\pi a_0^{-3/2}
\end{equation*}
Hence $\p_a F(\frac{1}{\sqrt{a_0}}, a_0)$ is
invertible, and by implicit function theorem we prove the first part
of the proposition.

Next we expand the function $F(v, a)$ in $a$ around $a_0$:
$$F(v, a)=F(v,a_0)+\p_a F(v,a_0)(a-a_0)+R(v,a),$$
where $R(v,a)=O(|a-a_0|^2)$ uniformly in $v$. By this we have
$|a-a_0|\lesssim |F(v,a_0)|+|a-a_0|^2$. From the condition
$\|v-\frac{1}{\sqrt{a_0}}\|<<1$ it follows that $|a-a_0|<<1$.
\end{proof}}

Unfortunately, we cannot apply Proposition \ref{prop:decomposition} directly to solutions, $\rho(\omega, \tau)$,
of the equation \eqref{eqnrho}, since the parameter-function $a(\tau)$ which would come out of
Proposition \ref{prop:decomposition} would have to be equal to the $a(\tau)$ entering \eqref{eqnrho}.
To overcome this problem, we 'deconstruct' $\rho(\omega, \tau)$, using that it originates
as $\rho(\omega,\tau):= \lambda(t)^{-1}R(\omega, t)$, with $R(\omega, t):= (x(\omega, t)-z(t))\cdot \omega$
(see Equation \eqref{collapsevar} and Theorem \ref{thmrho})
and $\tau=\tau(t)$ given by \eqref{collapsevar},
and we construct an orthogonal decomposition for $R(\omega, t)$. 

For any time $t_0$ and constant $\delta>0$we define $I_{t_0,\delta}:=[t_0, t_0+\delta]$ and
$$\mathcal{A}_{t_0,\delta}:=C^1(I_{t_0,\delta}, [n-\frac{1}{2}, n+\frac{1}{2}]).$$
Let $\lambda(t)$ be positive, differentiable function and denote $R_\lambda(\omega,t):= \lambda(t)^{-1}R(\omega, t)$. 
For any function $a\in \mathcal{A}_{t_0,\delta}$ and $\lambda_0 >0$, we define the positive function
$$\lambda(a,\lambda_0)(t):=(\lambda_0^2-2\int_{t_0}^t a(s)ds)^{1/2},$$
i.e.  $\lambda(a,\lambda_0)(t)$ satisfies $\lambda(t)\p_t \lambda(t)=a(t)$ and  $\lambda(t_0)=\lambda_0$.
Suppose $R$ is
such that 
\begin{equation}\label{Udefn}
\sup_{t\in I_{t_0,\delta}} \|R_{\lambda(\bar a, \lambda_0)}(t)-\sqrt{\frac{n}{\bar a(t)}}\| \ll 1,
\end{equation}
for some $a\in\mathcal{A}_{t_0,\delta}$ and $ \lambda_0>0$. We define the set
$$\mathcal{U}_{t_0,\delta,\lambda_0}:=\{ R \in C^1(I_{t_0,\delta}, L^2(\Omega))\ |\
\eqref{Udefn}\ \mbox{holds for some}\ a(t)\}.$$
In what follows, all inner products are the $L^2$ inner products.
\begin{prop}\label{prop:timedecom}
Suppose  $\lambda_0^{-2}\delta \ll 1$. Then there exists a unique $C^1$
map $g: \mathcal{U}_{t_0,\delta,\lambda_0} \rightarrow \mathcal{A}_{t_0,\delta}$, such that for $t\in I_{t_0,\delta}$, any  $R\in \mathcal{U}_{t_0,\delta,\lambda_0}$ can be uniquely represented in the form
\begin{equation}\label{vlambda}
R_{\lambda}(\omega,t)=\sqrt{\frac{n}{a(t)}}+\xi(\omega, t),
\end{equation}
with $g(R)(t)=a(t)$, $\xi(\cdot, t)\perp 1$ in $L^2(\Omega)$, and $\lambda(t)=\lambda(a, \lambda_0)$. 
\end{prop}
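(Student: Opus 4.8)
The plan is to adapt the argument of Proposition~\ref{prop:decomposition} to the time-dependent setting, treating the decomposition as a fixed-point problem in the Banach space $\mathcal{A}_{t_0,\delta}=C^1(I_{t_0,\delta},[n-\tfrac12,n+\tfrac12])$ rather than in $\R^+$. Given $R\in\mathcal{U}_{t_0,\delta,\lambda_0}$, by definition there is some reference function $\bar a\in\mathcal{A}_{t_0,\delta}$ for which \eqref{Udefn} holds; I will look for $a\in\mathcal{A}_{t_0,\delta}$ near $\bar a$ solving, for each fixed $t\in I_{t_0,\delta}$, the scalar equation
\begin{equation}\label{Feq}
F(R,a)(t):=\int_{S^n}\Big(R_{\lambda(a,\lambda_0)}(\omega,t)-\sqrt{\tfrac{n}{a(t)}}\Big)\,d\omega=0,
\end{equation}
where $R_{\lambda(a,\lambda_0)}(t)=\lambda(a,\lambda_0)(t)^{-1}R(t)$ and $\lambda(a,\lambda_0)(t)=(\lambda_0^2-2\int_{t_0}^t a(s)\,ds)^{1/2}$. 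Once this holds, setting $\xi(\omega,t):=R_{\lambda}(\omega,t)-\sqrt{n/a(t)}$ gives the representation \eqref{vlambda} with $\xi(\cdot,t)\perp 1$ in $L^2(\Omega)$, since the integral of $\xi(\cdot,t)$ against the constant $1$ is exactly $F(R,a)(t)$.

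The key steps, in order, would be: (i) Verify that the map $a\mapsto\lambda(a,\lambda_0)$ from $\mathcal{A}_{t_0,\delta}$ into positive $C^1$ functions on $I_{t_0,\delta}$ is well-defined and Lipschitz (here $\lambda_0^{-2}\delta\ll 1$ guarantees $\lambda_0^2-2\int a$ stays bounded away from $0$ and comparable to $\lambda_0^2$); and that $a\mapsto F(R,a)$ is $C^1$ as a map between the appropriate Banach spaces, with $F(R,\cdot)$ continuous in $R$ as well. (ii) Compute the partial derivative $\p_a F$ at the reference point and show it is boundedly invertible: differentiating, the $\sqrt{n/a}$ term contributes $\tfrac{\sqrt n}{2}a^{-3/2}|S^n|$ pointwise in $t$ exactly as in \eqref{F'}, while the $R_{\lambda(a,\lambda_0)}$ term contributes a derivative of $\lambda^{-1}$ with respect to $a$, which is an integral operator in $t$ (since $\lambda(a,\lambda_0)(t)$ depends on $a(s)$ for $s\le t$); but this integral term carries a factor $\int_{t_0}^t(\cdots)ds$ of size $O(\delta\lambda_0^{-2})\ll 1$ and is further multiplied by $\|R(t)\|_{L^1}\approx\|R_\lambda(t)\|_{L^1}\lambda(t)\approx |S^n|^{1/2}\sqrt{n/a}\,\lambda(t)$, so after dividing through by $\lambda(t)$ it is a small perturbation of the invertible multiplication operator $\tfrac{\sqrt n}{2}a^{-3/2}|S^n|$ on $C(I_{t_0,\delta})$. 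Hence $\p_a F$ is invertible with uniformly bounded inverse, and (iii) the implicit function theorem (or, to get quantitative control, the contraction-mapping rewrite $a-\bar a=\Phi_R(a-\bar a)$ exactly as in the proof of Proposition~\ref{prop:decomposition}, using a second-order Taylor remainder bound for $F$ in $a$) yields a unique $a=g(R)\in\mathcal{A}_{t_0,\delta}$ near $\bar a$ solving \eqref{Feq}, with $g$ of class $C^1$. (iv) Finally, check that the solution $a$ genuinely lands in $[n-\tfrac12,n+\tfrac12]$ rather than merely near $\bar a$: this follows because the smallness of \eqref{Udefn} forces $a(t)$ to be within $O(\|R_{\lambda(\bar a,\lambda_0)}(t)-\sqrt{n/\bar a(t)}\|_{L^1})$ of $\bar a(t)$, as in the estimate $|a(\rho)-a_\#|\lesssim\|\rho-\sqrt{n/a_\#}\|_{L^1}$ of Proposition~\ref{prop:decomposition}.

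The main obstacle I anticipate is step (ii): unlike the genuinely pointwise-in-$t$ situation of Proposition~\ref{prop:decomposition}, here $\lambda(a,\lambda_0)(t)$ depends nonlocally on $a$ through the integral $\int_{t_0}^t a(s)\,ds$, so $\p_a F$ is not simply multiplication by a nonvanishing scalar but a Volterra-type integral operator on $C(I_{t_0,\delta})$. Establishing its uniform invertibility is exactly where the hypothesis $\lambda_0^{-2}\delta\ll 1$ must be used decisively — it makes the nonlocal part a genuine small perturbation of an invertible local operator, so a Neumann series (equivalently, the contraction estimate on $\Phi_R$) closes. A secondary, more bookkeeping-level difficulty is verifying the $C^1$-regularity of all the maps in the $C^1(I_{t_0,\delta},\cdot)$ topology, including that $g(R)$ inherits $C^1$-dependence on $R$; this is routine given the implicit function theorem in Banach spaces, but requires care that the norms on $\mathcal{U}_{t_0,\delta,\lambda_0}$ and $\mathcal{A}_{t_0,\delta}$ are matched so that $\p_a F$ is a bounded isomorphism between the right spaces.
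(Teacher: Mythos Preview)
Your proposal is correct and follows essentially the same approach as the paper: define the functional $G(a,R)(t)=\ip{R_{\lambda(a)}(t)-\sqrt{n/a(t)}}{1}$, compute $\p_aG$ as the sum of the invertible multiplication operator $\tfrac{\sqrt n}{2}a^{-3/2}|S^n|$ and a Volterra-type perturbation coming from $\p_a R_{\lambda(a)}\alpha=\lambda(t)^{-2}R_{\lambda(a)}\int_{t_0}^t\alpha(s)\,ds$, use $\lambda_0^{-2}\delta\ll 1$ to make this perturbation small, and apply the implicit function theorem (with the quantitative contraction argument of Proposition~\ref{prop:decomposition} for the explicit neighbourhood). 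Your identification of step~(ii) as the heart of the matter, and of the hypothesis $\lambda_0^{-2}\delta\ll 1$ as precisely what makes the nonlocal part a small perturbation, matches the paper's argument exactly.
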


\begin{proof} In this proof we write $\lambda(a)$ instead of $\lambda(a, \lambda_0)$ and $R_{\lambda(a)}(t)$ for the function $\omega \ra R_{\lambda}(\omega,t)$. Define the $C^1$ map
$G: C^1(I_{t_0,\delta}, [n-\frac{1}{2}, n+\frac{1}{2}])\times C^1(I_{t_0,\delta}, L^2(\Omega))
\rightarrow C^1(I_{t_0,\delta}, \R)$ as
$$G(a,R)(t):=\ip{R_{\lambda(a)}(t)-\sqrt{\frac{n}{a(t)}}}{1}.$$
The orthogonality condition on the fluctuation can be written as $G(a,R)=0$. We solve this equation using the implicit function theorem.
Note first that $G(a,\sqrt{\frac{n}{a}})=0,\ \forall a$. Next, we 
compute
$$\p_a G=\ip{\frac{\sqrt{n}}{2}a^{-3/2}}{1}+\ip{\p_a R_{\lambda(a)}}{1}.$$
Note that $\p_a  R_{\lambda(a)} \alpha=\lambda(t)^{-2}R_{\lambda(a)}\int_{t_0}^t \alpha(s)ds.$
Using this expression and the inequality $\lambda(t)\geq \frac{\lambda_0}{\sqrt{2}}$, provided
that $\delta \leq (4\|a\|_{\infty})^{-1}\lambda_0^2$, we estimate
$$\|\p_a  R_{\lambda(a)} \alpha\|_{\infty} \lesssim \delta \lambda_0^2 \|R_\lambda\|_{\infty}\|\alpha\|_{\infty}.$$
So $\p_a  R_{\lambda(a)}$ is small, if $\delta \ll (\lambda_0^2 \|R_\lambda\|_{\infty})^{-1}$. This shows that $\p_a G(a,R)$
is invertible, provided $R_{\lambda(a)}$ is close to $\sqrt{\frac{n}{a}}$. Hence the implicit function theorem implies that for any $\bar a \in \mathcal{A}_{t_0,\delta}$ there exists
a neighborhood $\mathcal{U}_{\bar a}$ of $\sqrt{\frac{n}{\bar a}}$ in $ C^1(I_{t_0,\delta}, L^2(\Omega))$ and a unique $C^1$ map $g: \mathcal{V}_{\bar a}:=\{R\in  C^1(I_{t_0,\delta}, L^2(\Omega))\ |R_{\lambda(\bar a)}\in \mathcal{U}_{\bar a}\}
\rightarrow \mathcal{A}_{t_0,\delta}$, such that $G(g(R),R)=0$ for all $R\in \mathcal{V}_{\bar a}$.
 Proceeding as in the proof of Proposition \ref{prop:decomposition}, we obtain a quantative description of the neighbourhood , which implies the statement 
 of Proposition \ref{prop:timedecom}.
\end{proof}

Let an immersion $x(\cdot, t): \Omega \ra \R^{n+1}$ satisfy \eqref{MCF}
and let  $z(t) \in \R^{n+1}$ be as in Section \ref{sec:centerz}, i.e. such that \eqref{orthog'} holds.
We apply Proposition \ref{prop:timedecom} to $R(\omega, t)=(x(\omega, t)-z(t))\cdot \omega$  to obtain $a( \tau) $ and $\xi(\omega, \tau)$  s.t. $\rho(\omega, \tau)\equiv R_{\lambda( a, \lambda_0)}(\omega, t)$ satisfies
\begin{equation}\label{rho:orthdec}
\rho(\omega, \tau)=\sqrt{\frac{n}{a(\tau)}}+\xi(\omega, \tau),
\end{equation}
with $\xi \perp 1$. (Here in some functions we changed the time $t$ to $\tau =\tau(t)$, given in \eqref{collapsevar}.)
This together with
\eqref{rhoomegajorthog} 
implies that  $\int_\Omega (\rho-\sqrt{\frac{n}{a}})\omega^j=0,\ j=0, \dots, n+1,$ where we use
the notation $\omega^0=1$, or
\begin{equation}\label{orthv}
\rho-\sqrt{\frac{n}{a}}\perp \omega^j,\ j=0, \dots, n+1,\ \text{in}\  L^2(\Omega).
\end{equation}

\section{Lyapunov-Schmidt decomposition}\label{sec:decomposition}
Let $\rho$ solve  \eqref{eqnrho} and 
assume it can be written as $\rho(\omega, \tau)=\rho_{a(\tau)}+\xi(\omega, \tau)$, with $\rho_a=\sqrt{\frac{n}{a}}$ and $\xi\perp  \omega^j,\ j=0, \dots, n+1$.
Plugging this into equation \eqref{eqnrho}, we obtain the equation
\begin{equation}\label{eqn:xi}
\frac{\p \xi}{\p \tau}=-L_a \xi+N(\xi)+\lambda^{-1}\tilde{z}_\tau\cdot\frac{\nabla \xi}{\rho_a+\xi}+{F},
\end{equation}
where $L_a=-\p G(\rho_a)$, ${N}(\xi)=G(\rho_a+\xi)-G(\rho_a)-\p G(\rho_a)\xi$, ${F}=-\p_\tau \rho_a-\lambda^{-1}z_\tau\cdot\omega$ and, recall, $z_\tau=\frac{\p z}{\p\tau}$ and $\tilde{z}_{\tau k}=\frac{\p x^i}{\p u^k}z_{\tau}^i$. Let $a_\tau=\frac{\p a}{\p \tau}$. We compute
\begin{equation}\label{LNF}
\begin{array}{ll}
& L_a=\frac{a}{n}(-\Delta-2n),\\
& {N}(\xi)=-\frac{(\rho_a+\rho)\xi\Delta\xi}{\rho^2\rho_a^2}-\frac{n\xi^2}{\rho\rho_a^2}+\frac{|\nabla\xi|^2}{\rho(\rho^2+|\nabla\xi|^2)}
-\frac{\nabla\xi\cdot \Hess(\xi)\nabla\xi}{\rho^2(\rho^2+|\nabla\xi|^2)},\\
& {F}=\frac{\sqrt{n}}{2}a^{-3/2}a_\tau
-\lambda^{-1}z_\tau\cdot\omega.
\end{array}
\end{equation}

Now, we project \eqref{eqn:xi}
onto $span\{\omega^j,\ j=0, \dots, n+1\}$. By
\begin{equation}\label{xiorthog}
\xi\bot \omega^j,\ j=0, \dots, n+1,
\end{equation} 
we have
\begin{equation}\label{projection1}
\frac{\sqrt{n}}{2}a^{-3/2}a_\tau|S^n|=\ip{{N}(\xi)+\lambda^{-1}\tilde{z}_\tau\cdot\frac{\nabla \xi}{\rho_a+\xi}}{1},
\end{equation}
\begin{equation}\label{projectionomegaj}
-c\lambda^{-1}z^j_\tau=\ip{{N}(\xi)+\lambda^{-1}\tilde{z}_\tau\cdot\frac{\nabla \xi}{\rho_a+\xi}}{\omega^j},\ j=1, \dots, n+1,
\end{equation}
where $c:=\int_\Omega (\omega^j)^2=\frac{1}{n+1}|\Omega|$. Indeed, this equation follows from
\begin{itemize}
\item
$\ip{\p_\tau\xi}{\omega^j}=-\ip{\xi}{\p_\tau \omega^j}=0,$ $j=0, \dots, n+1$;
\item $\ip{L_a\xi}{\omega^j}=\ip{\xi}{L_a \omega^j}=0,$ $j=0, \dots, n+1$;
\item
$\ip{{F}}{1}
=\ip{\frac{\sqrt{n}}{2}a^{-3/2}a_\tau}{1}=\frac{\sqrt{n}}{2}a^{-3/2}a_\tau|\Omega|;$
\item
$\ip{{F}}{\omega^j}
=-\lambda^{-1}\ip{z_\tau\cdot\omega}{\omega^j}=-c \lambda^{-1}z^j_\tau,\ j=1, \dots, n+1.$
\end{itemize}

\DETAILS{
\textbf{An alternative to proceeding as in the NLH (see the remark at the beginning of Section \ref{sec:reparametrization}) would be to consider the system \eqref{eqn:xi}-\eqref{projectionomegaj} and show the local existence for this system. The equations \eqref{eqn:xi}-\eqref{projectionomegaj} imply $\xi\bot \omega^j,\ j=0, \dots, n+1$ . Another alternative pointed out in Section \ref{sec:mainproof}.}
}

Equations \eqref{projection1} and \eqref{projectionomegaj} give
\begin{align}\label{estalpha}
\big|a^{-3/2}a_\tau\big| &\lesssim
\big|\ip{{N}(\xi)+\lambda^{-1}\tilde{z}_\tau\cdot\frac{\nabla \xi}{\rho_a+\xi}}{1}\big|\notag \\
&\lesssim\|{N}(\xi)\|_{L^1}+\lambda^{-1}|z_\tau|\|\nabla \xi\|_{L^1}
\end{align}
and
\begin{align}\label{estz}\lambda^{-1}\big|z_\tau\big| &\lesssim
\big|\ip{{N}(\xi)+\lambda^{-1}\tilde{z}_\tau\cdot\frac{\nabla \xi}{\rho_a+\xi}}{\omega}\big|\notag \\
&\lesssim \|{N}(\xi)\|_{L^1}+\lambda^{-1}|z_\tau|\|\nabla \xi\|_{L^1}.
\end{align}

Next, we estimate ${N}(\xi)$. Using \eqref{LNF}, 
where, recall, $\rho=\rho_a+\xi$, and assuming that $|\xi| \le \frac{1}{2}\rho_a$, we have that
\begin{equation}\label{est_N}\|{N}(\xi)\|_{L^1} \lesssim
(\|\nabla\xi\|^2_{L^{4}}+\|\xi\|_{H^1})\|\xi\|_{H^2}.
\end{equation}
This together with \eqref{estalpha} and \eqref{estz} gives
\begin{equation}\label{a_tau}
|a^{-3/2}a_\tau|\lesssim(\|\nabla\xi\|^2_{L^{4}}+\|\xi\|_{H^1})\|\xi\|_{H^2}
\end{equation}
and, provided that $\|\xi\|_{H^1} \ll 1$,
\begin{equation}\label{z_tau}
|z_\tau|\lesssim \lambda(\|\nabla\xi\|^2_{L^{4}}+\|\xi\|_{H^1})\|\xi\|_{H^2}.
\end{equation}

\section{Linearized operator}\label{sec:linearopr}

The linearization of the map $-G(\rho)$ at $\rho_a=\sqrt{\frac{n}{a}}$ is the  operator
$L_a:=-\p J(\sqrt{\frac{n}{a}})=\frac{a}{n}(-\Delta-2n)$.
The spectrum of $-\Delta$ is well known (see \cite{SW}):
$\{l(l+n-1)|\ l=0,1,\cdots\}.$ Let $H_l$ be the space of all
the eigenfunctions corresponding to the eigenvalue $l(l+n-1)$ of
$-\Delta$. Then $\dim \ H_l=\left(\begin{array}{c}  n+l \\
                                                  n
                                                \end{array}\right)-
                                                \left(\begin{array}{c}
                                                n+l-2 \\
                                                   n
                                                   \end{array}\right).$
Moreover, $H_0=\Span\{1\}$ and $H_1=\Span\{\frac{x^1}{|x|}, \cdots,
\frac{x^{n+1}}{|x|}\}$. Hence the spectrum of $L_a$ is
$\{\frac{a}{n}(l(l+n-1)-2n)|\ l=0,1,\cdots\}$ and
\begin{equation}\label{EVs}
L_a   \omega^j=-2a\delta_{j0},\ j=0, \dots, n+1.
\end{equation}
The conclusions above imply that
\begin{equation}\label{coercivity}
\ip{\xi}{L_a \xi} \geq \frac{2a}{n}\|\xi\|^2\
\text{if}\ \xi \perp \omega^j,\ j=0, \dots, n+1.
\end{equation}
Now, \eqref{coercivity} is the reason why we need the conditions \eqref{orthv}.

Observe that the eigenfuctions $\omega^j$ are related to the zero modes of the operator $L_\alpha:=\p J(\rho_\alpha)$, where $\alpha=(R, z)$,   and $\rho_\alpha$ is the map from
$\Omega$ to $\R$ satisfying
$|\rho_\alpha(x)\hat{x}-z|=R$. Note that, if $S_{R,z}$ denotes the sphere in $\R^{n+1}$ of radius $R$, centered at $z\in\R^{n+1}$ and
$graph(\rho):= \{ \rho(\omega)\omega: \omega \in S^n\}$ for $\rho: S^n \rightarrow \R^{+}$, then  $S_\alpha=graph(\rho_\alpha)$. Hence $ J(\rho_\alpha)=0$ for any $\alpha$.
Indeed, differentiating $ J(\rho_\alpha)=0$ we find $\p J(\rho_\alpha)\p_\alpha \rho_\alpha+\p_\alpha J(\rho_\alpha)=0$, which implies
 $L_{\alpha}\p_R \rho_\alpha=-2a\p_R \rho_\alpha$, $\ L_{\alpha}\p_z \rho_\alpha=0$, i. e. $\p_\alpha \rho_\alpha$ are eigenfunctions of the operator
$L_\alpha$. 

On the other hand, the equation for $\rho_\alpha$ implies
implies that
$\rho_\alpha(x)^2+|z|^2-2\rho_\alpha(x)z\cdot\hat{x}=R^2$ and therefore 
$$\rho_\alpha (\hat{x})=z\cdot\hat{x}+\sqrt{R^2-|z|^2+(z\cdot\hat{x})^2},$$ 
where, recall, $\hat{x}=\frac{x}{|x|}$. Differentiating the former relation with respect to $R$ and $z^j$, we obtain
\begin{equation}\label{prhoalpha}
\p_R\rho_\alpha(x)=\frac{R}{\rho_\alpha(x)-z\cdot\hat{x}}\ \textrm{and}\ \p_{z^j}\rho_\alpha(x)=\frac{\rho_\alpha(x)\hat{x}^j-z^j}
{\rho_\alpha(x)-z\cdot\hat{x}}.
\end{equation}
Hence we have that
\begin{equation}\label{zeromodes}
\p_R\rho_\alpha(x)=1+O(|z|),\ \p_{z^j}\rho_\alpha(x)=\hat{x}^j+O(|z|).
\end{equation}
Since $L_\alpha=L_a+O(|z|)$, these equations and $L_{\alpha}\p_R \rho_\alpha=-2a\p_R \rho_\alpha$, $\ L_{\alpha}\p_z \rho_\alpha=0$ imply \eqref{EVs}. This relates the zero modes \eqref{prhoalpha} to the eigenfunctions in \eqref{EVs}. Finally, we note that $\p_\alpha\rho_\alpha$ are tangent vectors of the manifold of spheres, $\{S_{R, z}\ |R\in \R^+,\ z\in \R^{n+1}\}$.

%
%
\DETAILS{
When $n=1$, if $a$ is a positive constant, then $v=\frac{1}{\sqrt{a}}$ is a static solution to \eqref{eqn_v}. The linearized operator
of $-J(v)$ at $\frac{1}{\sqrt{a}}$ is $L_a:=-\p J(\frac{1}{\sqrt{a}})=a(-\p_\theta^2-2)$. The spectrum of $L_a$ is
$\{a(n^2-2), n=0,1,\cdots\}.$ The corresponding eigenfunctions of $L_a$ for $-2a$ and $-a$ are $1$,
$\cos\theta$ and $\sin\theta$, respectively. Hence if $\xi \perp 1, \cos\theta, \sin\theta$, then $\ip{L_a\xi}{\xi}\geq 2a\|\xi\|^2.$

When $n=2$, if $a$ is a positive constant, then $v=\sqrt{\frac{2}{a}}$ is a static solution to \eqref{eqn_v}. The linearized operator
of $-J(v)$ at $\sqrt{\frac{2}{a}}$ is $L_a:=-\p J(\sqrt{\frac{2}{a}})=-\frac{a}{2}(-\Delta_{S^2}-4)$. The spectrum of $-\Delta_{S^2}$
is $\{l(l+1): l=0, 1, \cdots\}$ and the eigenfunctions corresponding to $l(l+1)$ are the spherical
harmonics $Y_l^m$, where $m=-l, \cdots, l$. So if $\xi \perp Y_0^0, Y_1^{-1}, Y_1^0, Y_1^1$, then $\ip{\xi}{L_a\xi}\geq a\|\xi\|^2.$
Note that $Y_0^0=\frac{1}{2}\sqrt{\frac{1}{\pi}}$, $Y_1^{-1}=\frac{1}{2}\sqrt{\frac{3}{2\pi}}\sin\theta \cos\psi$,
$Y_1^0=\frac{1}{2}\sqrt{\frac{3}{\pi}}\cos\theta$ and $Y_1^1=-\frac{1}{2}\sqrt{\frac{3}{2\pi}}\sin\theta \sin\psi$.
}

\section{Lyapunov functional}\label{sec:functional}
Let a function $\xi$ obey \eqref{eqn:xi} and \eqref{xiorthog}. Using that \eqref{xiorthog} implies $\ip{L_a\xi}{\xi}\geq \frac{2a}{n}\|\xi\|^2$, we derive in this section some differential inequalities for certain Sobolev norms of such a $\xi$. These inequalities allow us to prove a priori estimates for these Sobolev norms. For $k\geq 1$, we define the functional
$\Lambda_k(\xi)=\frac{1}{2}\ip{\xi}{L_a^k\xi}$.
\begin{prop}\label{Hnorm}
There exist constants $c>0$ and $C>0$ such that
$$ca^k\|\xi\|^2_{H^k}\leq \Lambda_k(\xi)\leq Ca^k\|\xi\|^2_{H^k}.$$
\end{prop}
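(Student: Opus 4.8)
The plan is to diagonalize everything in the basis of spherical harmonics and reduce the statement to an elementary two‑sided bound on the ratios of eigenvalues. Since $\xi$ is assumed to satisfy the orthogonality conditions \eqref{xiorthog}, i.e. $\xi\perp\omega^j$ for $j=0,\dots,n+1$, and since $\Span\{\omega^0,\dots,\omega^{n+1}\}=H_0\oplus H_1$ by the discussion in Section \ref{sec:linearopr}, I would expand $\xi=\sum_{l\ge2}\xi_l$ with $\xi_l\in H_l$, so that $-\Delta\xi_l=l(l+n-1)\xi_l$. Writing $\mu_l:=\frac{a}{n}\big(l(l+n-1)-2n\big)$ for the corresponding eigenvalue of $L_a$, one has $L_a^k\xi=\sum_{l\ge2}\mu_l^k\xi_l$ and hence $\Lambda_k(\xi)=\tfrac12\sum_{l\ge2}\mu_l^k\|\xi_l\|^2$. (Here $a>0$ since $a\in[n-\tfrac12,n+\tfrac12]$, so the $\mu_l$ make sense.)

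Next I would record the elementary inequality that drives the proof. Put $\Lambda(l):=l(l+n-1)$. The function $x\mapsto (x-2n)/(1+x)$ is strictly increasing for $x>-1$ (its derivative equals $(1+2n)/(1+x)^2>0$) and tends to $1$ as $x\to\infty$; since $\Lambda(l)\ge\Lambda(2)=2n+2$ for $l\ge2$, it follows that
\begin{equation*}
\frac{2}{2n+3}\ \le\ \frac{\Lambda(l)-2n}{1+\Lambda(l)}\ <\ 1 \qquad (l\ge2).
\end{equation*}
Raising to the $k$‑th power and multiplying by $(a/n)^k(1+\Lambda(l))^k$ gives, for every $l\ge2$,
\begin{equation*}
\Big(\tfrac{2}{n(2n+3)}\Big)^{k} a^k\,(1+\Lambda(l))^k\ \le\ \mu_l^k\ \le\ \tfrac{a^k}{n^k}\,(1+\Lambda(l))^k,
\end{equation*}
using also $\Lambda(l)-2n\le\Lambda(l)\le 1+\Lambda(l)$ for the upper bound. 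Summing against $\|\xi_l\|^2$ over $l\ge2$ yields
\begin{equation*}
\Big(\tfrac{2}{n(2n+3)}\Big)^{k} a^k\sum_{l\ge2}(1+\Lambda(l))^k\|\xi_l\|^2\ \le\ \ip{\xi}{L_a^k\xi}\ \le\ \tfrac{a^k}{n^k}\sum_{l\ge2}(1+\Lambda(l))^k\|\xi_l\|^2.
\end{equation*}

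Finally I would invoke the standard fact that $\sum_{l\ge0}(1+\Lambda(l))^k\|\xi_l\|^2=\ip{\xi}{(1-\Delta)^k\xi}$ defines a norm on $H^k(S^n)$ equivalent to $\|\cdot\|_{H^k}$ (equivalence of the spectrally defined Sobolev norm with the one built from covariant derivatives on a closed manifold — elementary on the sphere via spherical harmonics, cf. \cite{SW}); under \eqref{xiorthog} the sum over $l\ge2$ coincides with the full sum over $l\ge0$. Combining this equivalence with the displayed two‑sided bound gives $ca^k\|\xi\|_{H^k}^2\le\Lambda_k(\xi)\le Ca^k\|\xi\|_{H^k}^2$ with $c,C$ depending only on $n$ and $k$. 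I do not expect a real obstacle here; the only point that must not be overlooked is that the orthogonality \eqref{xiorthog} is genuinely needed for the lower bound — the components of $\xi$ in $H_1$ lie in the kernel of $L_a$, and those in $H_0$ in its negative spectrum, so without \eqref{xiorthog} the functional $\Lambda_k$ would fail to control $\|\xi\|_{H^k}$ (and, for odd $k$, $\Lambda_k$ would not even be nonnegative).
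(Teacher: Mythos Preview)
Your argument is correct. You diagonalize $L_a$ in the spherical harmonic basis, compare the eigenvalues $\mu_l^k$ pointwise with $a^k(1+\Lambda(l))^k$ via the elementary monotonicity of $x\mapsto(x-2n)/(1+x)$, and then invoke the equivalence of the spectral Sobolev norm with $\|\cdot\|_{H^k}$. The constants you obtain are explicit and depend only on $n$ and $k$, and your closing remark that \eqref{xiorthog} is genuinely needed (because of the $H_0$ and $H_1$ components) is exactly to the point.

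The paper takes a slightly different route for the lower bound. Rather than a term-by-term eigenvalue comparison, it uses a convex-combination (G{\aa}rding-type) trick: from the coercivity $\ip{\xi}{L_a^k\xi}\ge(2a/n)^k\|\xi\|^2$ on $(\Span\{\omega^j\})^\perp$ and the operator inequality $\ip{\xi}{L_a^k\xi}\ge(a/n)^k(\|\xi\|_{H^k}^2-C\|\xi\|^2)$, one forms $\mu$ times the second plus $(1-\mu)$ times the first and chooses $\mu$ so that the negative $L^2$ term is absorbed. Your spectral comparison is more direct on the sphere and yields explicit constants; the paper's interpolation argument is the ``soft'' version that would transfer verbatim to any self-adjoint elliptic operator with a spectral gap, without needing the full eigenvalue list. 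Both are standard and equally valid here.
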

\begin{proof} By a standard
computation, we see that there exists a $C>0$ such that $\ip{\xi}{L_a^k\xi}\leq
Ca^k\|\xi\|^2_{H^k}$. We prove the lower bound below. Recall that
$\ip{\xi}{L_a\xi}\geq \frac{2a}{n} \|\xi\|^2$. From the definition of $L_a$ we also have
$\ip{\xi}{L_a\xi}=C_1a\|\nabla\xi\|^2-C_2a\|\xi\|^2$ for some $C_1>0$ and $C_2>0$.
These two inequalities imply that
\begin{equation*}
\begin{array}{ll}
\ip{\xi}{L_a\xi} & =\mu\ip{\xi}{L_a\xi}+(1-\mu)\ip{\xi}{L_a\xi}\\
& \geq \mu C_1a\|\nabla\xi\|^2-\mu C_2
a\|\xi\|^2+(1-\mu)C a\|\xi\|^2\\
& =\mu C_1 a(\|\nabla\xi\|^2+\|\xi\|^2)
\end{array}
\end{equation*}
provided that $\mu=\frac{C}{C+C_1+C_2}$, where $C=\frac{2a}{n}$.

For the general case, observe that $L_a$ is a self-adjoint
operator and $L_a^k$ has the same eigenfunctions as $L_a$ with eigenvalues
$\{\frac{a^k}{n^k}(l(l+n-1)-n)^k: l=0, 1, \cdots\}$. Hence, by \eqref{coercivity}, $\ip{\xi}{L_a^k \xi}\geq (\frac{2a}{n})^k \|\xi\|^2.$
On the other hand, we have as before $\ip{\xi}{L_a^k \xi}\geq (\frac{a}{n})^k[\|\xi\|_{H^k}^2-C\|\xi\|^2].$ Then proceeding
as above we find $\ip{\xi}{L_a^k \xi}\gtrsim a^k\|\xi\|_{H^k}^2,$ which is the lower bound in the proposition.
\end{proof}

\begin{prop}\label{diffineq}
Let $k>\frac{n}{2}+1$.
Then there exists a constant $C>0$ such that
\begin{equation}\label{diffinequality}
\p_\tau \Lambda_k(\xi)\leq
-\frac{a}{n}\Lambda_k(\xi)-[\frac{1}{2}-
C(\Lambda_k(\xi)^{1/2}+\Lambda_k(\xi)^k)]\|L_a^{\frac{k+1}{2}}\xi\|^2.
\end{equation}
\end{prop}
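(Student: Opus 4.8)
The plan is to differentiate $\Lambda_k(\xi) = \frac{1}{2}\ip{\xi}{L_a^k\xi}$ in $\tau$, using the evolution equation \eqref{eqn:xi} for $\xi$, and then to estimate each resulting term so that the linear dissipative term $-\|L_a^{(k+1)/2}\xi\|^2$ dominates. First I would write
$$\p_\tau \Lambda_k(\xi) = \ip{\p_\tau\xi}{L_a^k\xi} + \frac{1}{2}(\p_\tau a)\ip{\xi}{(\p_a L_a^k)\xi}.$$
Using $L_a = \frac{a}{n}(-\Delta - 2n)$, the $a$-derivative term is a lower-order contribution proportional to $a^{-3/2}a_\tau$ times $\Lambda_k(\xi)$; by \eqref{a_tau} this is bounded by $(\|\nabla\xi\|_{L^4}^2 + \|\xi\|_{H^1})\|\xi\|_{H^2}\,\Lambda_k(\xi)$, which via Proposition \ref{Hnorm} and Sobolev embedding ($k > \frac{n}{2}+1$) is absorbed into the $C(\Lambda_k^{1/2} + \Lambda_k^k)\|L_a^{(k+1)/2}\xi\|^2$ remainder (and partly into the $-\frac{a}{n}\Lambda_k$ term). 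Substituting \eqref{eqn:xi} into the main term gives four pieces:
$$\ip{-L_a\xi}{L_a^k\xi} + \ip{N(\xi)}{L_a^k\xi} + \ip{\lambda^{-1}\tilde z_\tau\cdot\tfrac{\nabla\xi}{\rho_a+\xi}}{L_a^k\xi} + \ip{F}{L_a^k\xi}.$$

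The first piece is exactly $-\|L_a^{(k+1)/2}\xi\|^2 \le -\frac{a}{n}\Lambda_k(\xi) - (1-\tfrac{1}{n})\|L_a^{(k+1)/2}\xi\|^2$ roughly speaking — more precisely, since $\ip{\xi}{L_a^{k+1}\xi} \ge \frac{2a}{n}\ip{\xi}{L_a^k\xi}$ (spectral bound, as in Proposition \ref{Hnorm}), we get $-\ip{\xi}{L_a^{k+1}\xi} \le -\frac{a}{n}\Lambda_k(\xi) - \tfrac{1}{2}\ip{\xi}{L_a^{k+1}\xi}$, which supplies both the $-\frac{a}{n}\Lambda_k$ term and the leading $-\frac{1}{2}\|L_a^{(k+1)/2}\xi\|^2$. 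The $F$ term I would handle using that $F = \frac{\sqrt n}{2}a^{-3/2}a_\tau - \lambda^{-1}z_\tau\cdot\omega$ by \eqref{LNF}; the first summand is a constant (in $\omega$) times $\omega^0 = 1$ and $\lambda^{-1}z_\tau\cdot\omega \in \Span\{\omega^j: j=1,\dots,n+1\}$, so both lie in $\Span\{\omega^j\}$. Since $L_a^k$ preserves this span and $\xi \perp \omega^j$ for all $j = 0,\dots,n+1$ by \eqref{xiorthog}, we get $\ip{F}{L_a^k\xi} = \ip{L_a^k F}{\xi} = 0$. This is the cleanest part, and it is the payoff of having set up the orthogonality conditions \eqref{orthv}.

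The main work — and the main obstacle — is the nonlinear term $\ip{N(\xi)}{L_a^k\xi}$ and, secondarily, the transport term $\ip{\lambda^{-1}\tilde z_\tau\cdot\frac{\nabla\xi}{\rho_a+\xi}}{L_a^k\xi}$. For $N(\xi)$, I would integrate by parts to move $\lfloor k/2\rfloor$ or so derivatives onto $N(\xi)$, rewrite $\ip{N(\xi)}{L_a^k\xi}$ as $\ip{L_a^{(k-1)/2}N(\xi)}{L_a^{(k+1)/2}\xi}$ (using self-adjointness of $L_a$), apply Cauchy–Schwarz to pull out $\|L_a^{(k+1)/2}\xi\|$, and then estimate $\|L_a^{(k-1)/2}N(\xi)\| \lesssim \|N(\xi)\|_{H^{k-1}}$. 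The expression \eqref{LNF} for $N(\xi)$ is quadratic-and-higher in $\xi$ with up to two derivatives (terms like $\xi\Delta\xi/\rho^2\rho_a^2$, $|\nabla\xi|^2/\rho(\dots)$, $\nabla\xi\cdot\Hess(\xi)\nabla\xi/\rho^2(\dots)$), so $\|N(\xi)\|_{H^{k-1}}$ should be controlled by a product $P(\|\xi\|_{H^k})\cdot\|\xi\|_{H^{k+1}}$ where $P$ vanishes at $0$ — using that $H^{k-1}$ with $k-1 > \frac n2$ is a Banach algebra, Moser-type product estimates, and the fact that $\rho = \rho_a + \xi$ stays bounded away from zero when $\|\xi\|_{H^2} \ll 1$; the denominators $\rho^2 + |\nabla\xi|^2$ are handled by expanding $1/(\rho^2+|\nabla\xi|^2)$ as a convergent series or by direct Moser estimates on compositions. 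The factor pulled out in front of $\|L_a^{(k+1)/2}\xi\|^2$ should then be of the form $C(\Lambda_k^{1/2} + \Lambda_k^k)$ after converting Sobolev norms to $\Lambda_k$ via Proposition \ref{Hnorm} — the $\Lambda_k^{1/2}$ coming from the genuinely quadratic part of $N$ and the $\Lambda_k^k$ absorbing the higher-order tail from expanding the denominators. The transport term is similar but easier: $\lambda^{-1}|z_\tau| \lesssim (\|\nabla\xi\|_{L^4}^2 + \|\xi\|_{H^1})\|\xi\|_{H^2}$ by \eqref{z_tau}, so this whole term is already at least cubic in $\xi$ and estimated exactly as above, contributing only to the $C\Lambda_k^{1/2}$-type factor. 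Collecting everything, the $-\frac{1}{2}\|L_a^{(k+1)/2}\xi\|^2$ from the linear term, minus the $C(\Lambda_k^{1/2}+\Lambda_k^k)\|L_a^{(k+1)/2}\xi\|^2$ absorbing all nonlinear and transport and $a_\tau$-contributions, minus the extracted $-\frac{a}{n}\Lambda_k$, gives precisely \eqref{diffinequality}. The delicate point throughout is bookkeeping the exact powers of $\Lambda_k$ (equivalently $\|\xi\|_{H^k}$) so that they match the stated $\Lambda_k^{1/2} + \Lambda_k^k$ form rather than something weaker; this is where I would spend the most care.
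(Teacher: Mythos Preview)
Your proposal is correct and follows essentially the same route as the paper: differentiate $\Lambda_k$, split $\ip{L_a\xi}{L_a^k\xi}$ into a $\tfrac12\|L_a^{(k+1)/2}\xi\|^2$ piece and a piece handled by the coercivity bound \eqref{coercivity}, kill $\ip{F}{L_a^k\xi}$ via the orthogonality \eqref{xiorthog} and \eqref{EVs}, and control the nonlinear, transport, and $a_\tau$ contributions by $C(\Lambda_k^{1/2}+\Lambda_k^k)\|L_a^{(k+1)/2}\xi\|^2$ using \eqref{a_tau}, \eqref{z_tau}, and the key estimate $\|L_a^{(k-1)/2}N(\xi)\|\lesssim (\Lambda_k^{1/2}+\Lambda_k^k)\|L_a^{(k+1)/2}\xi\|$, which the paper isolates as \eqref{N-pro} and proves in Appendix~B by exactly the Moser/product-type arguments you sketch. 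The only cosmetic difference is that the paper writes the linear split as $\ip{L_a\xi}{L_a^k\xi}=\tfrac12\|L_a^{(k+1)/2}\xi\|^2+\tfrac12\ip{L_a^{k/2}\xi}{L_aL_a^{k/2}\xi}$ before invoking \eqref{coercivity}, whereas you phrase it directly as a spectral inequality; these are the same step.
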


\begin{proof}
We have
\begin{equation}\label{eqn:differinner}
\frac{1}{2}\p_\tau \ip{\xi}{L_a^k \xi}=\ip{\p_\tau \xi}{L_a^k \xi}+\frac{1}{2}\ip{\xi}{(\p_\tau L_a^k)\xi}.
\end{equation}
First, from \eqref{eqn:xi}
\begin{equation}
\ip{\p_\tau\xi}{L_a^k \xi}=-\ip{L_a\xi}{L_a^k \xi}+\ip{N (\xi)}{L_a^k \xi}+
\ip{\lambda^{-1}\tilde{z}_\tau\cdot\frac{\nabla \xi}{\rho_a+\xi}}{L_a^k\xi}+\ip{F}{L_a^k \xi}.
\end{equation}
We consider each term on the right hand side. We have by \eqref{coercivity}
\begin{equation}\label{ineqn:innerprdct}
\begin{array}{ll}
\ip{L_a\xi}{L_a^k \xi}&=\frac{1}{2}\|L_a^{\frac{k+1}{2}}\xi\|^2+\frac{1}{2}\ip{L_a^{\frac{k}{2}}\xi}{L_a
L_a^{\frac{k}{2}}\xi}\\
 &\geq \frac{1}{2}\|L_a^{\frac{k+1}{2}}\xi\|^2+\frac{a}{n}\ip{L_a^{\frac{k}{2}}\xi}{L_a^{\frac{k}{2}}\xi}.
\end{array}
\end{equation}

To estimate the next term we need the following inequality proven in Appendix B:
\begin{equation}\label{N-pro}
\|L_a^{\frac{k-1}{2}}N(\xi)\|\lesssim (\Lambda_k^{1/2}(\xi)+\Lambda_k^k(\xi))\|L_a^{\frac{k+1}{2}}\xi\|.
\end{equation}
This estimate implies that
\begin{equation}\label{ineqn:mathN}
\begin{array}{ll}
|\ip{N(\xi)}{L_a^k \xi}| & = |\ip{L_a^{\frac{k-1}{2}}N(\xi)}{L_a^{\frac{k+1}{2}}\xi}|\\
& \leq \|L_a^{\frac{k-1}{2}}N(\xi)\|\|L_a^{\frac{k+1}{2}}\xi\|\\
& \leq C(\Lambda_k^{1/2}(\xi)+\Lambda_k^k(\xi))\|L_a^{\frac{k+1}{2}}\xi\|^2.
\end{array}
\end{equation}
From \eqref{z_tau} and Proposition \ref{Hnorm} we obtain that
\begin{equation}\label{ineqn:ztau}
\ip{\lambda^{-1}\tilde{z}_\tau\cdot\frac{\nabla \xi}{\rho_a+\xi}}{L_a^k\xi}=\ip{L_a^{\frac{k-1}{2}}(\lambda^{-1}\tilde{z}_\tau\cdot\frac{\nabla \xi}{\rho_a+\xi})}{L_a^{\frac{k+1}{2}}\xi}
\leq C(\Lambda_k^{1/2}(\xi)+\Lambda_k^k(\xi))\|L_a^{\frac{k+1}{2}}\xi\|^2.
\end{equation}
We have, by \eqref{EVs}, \eqref{orthv} (i.e. $L_a \omega^j= -2a \delta_{j0} $,   $\ip{\omega^j}{\xi}=0$) and the self-adjointness of $L_a$, that 
$\ip{\omega^j}{ L_a^k \xi}=0,\ j=0, \dots, n+1$, and therefore
\begin{equation}\label{eqn:mathF}
\ip{N}{L_a^k\xi}=0.
\end{equation}
Finally, 
we have using \eqref{a_tau}
\begin{equation}\label{ineqn:innerprdct1}
\ip{\xi}{(\p_\tau L_a^k)\xi}=\frac{ka_\tau}{a}\ip{\xi}{L_a^k\xi}\leq C(\|\xi\|_{H^k}+\|\xi\|^{2k}_{H^k})\|L_a^{\frac{k+1}{2}}\xi\|^2.
\end{equation}
Relations \eqref{eqn:differinner}-\eqref{ineqn:innerprdct1} yield \eqref{diffinequality}.
\end{proof}

\section{Proof of  Theorem \ref{thmrho}}\label{sec:mainproof}
We begin with reparametrizing the initial condition.
Applying Proposition \ref{prop:centerz}, to the immersion $x_0 (\omega)$
and the number $\lambda_0=1$, we find $z_0\in \R^{n+1}$, s.t. $\int_\Omega \rho_0 (\omega)\omega^j=0,\ j=1, \dots, n+1,$
where $\rho_0 (\omega)=(x_0 (\omega)-z_0)\cdot \omega$. Then we use Proposition \ref{prop:decomposition}
for $\rho_0 (\omega)$ to obtain $a_0$ and $\xi_0 (\omega)$,  s.t. $\rho_0=\rho_{a_0}+\xi_0$, with $\xi_0 \perp 1$.
Here, recall, $\rho_a=\sqrt{\frac{n}{a}}$. The last two statements imply that $\xi_0 \perp \omega^j,\ j=0, \dots, n+1,$ where,
recall, $\omega^0 =1$. If the initial condition, $x_0 (\omega)$, is sufficiently close to the identity, then 
$a_0$ and $\xi_0 (\omega)$ satisfy $\Lambda_k(\xi_0)^{\frac{1}{2}}+\Lambda_k(\xi_0)^k\leq \frac{1}{10C}$,
$\Lambda_k(\xi_0)\ll 1$ and $|a_0-n|\leq \frac{1}{10}$ (see Proposition \ref{prop:decomposition}),  where the constant $C$ is the same as in Proposition \ref{diffineq}.

Now we use the local existence result for the mean curvature flow.
For $\delta>0$ sufficiently small, the solution, $x(\omega, t)$,
in the interval $[0, \delta]$, stays sufficiently close to the standard sphere $\Omega$.
Hence we can apply  Proposition \ref{prop:centerz}, with $\bar z(t)=0$, to this solution in order to find $z(t)$,
s.t.
$$\int_\Omega ((x (\omega, t)-z(t))\cdot \omega)\omega^j=0,\ j=1, \dots, n+1,\ \text{and}\ z(0)=z_0.$$
By Proposition \ref{prop:eqnrho}, $y (\omega, \tau):=\lambda(t)^{-1}(x (\omega, t)-z(t))=\rho(\omega, \tau) \omega$,
with  $\rho(\omega, \tau)=(x (\omega, t)-z(t))\cdot \omega$ and $\lambda(t)$ satisfying \eqref{eqnrho}.  
Finally we apply Proposition \ref{prop:timedecom} to $R(\omega, t):=(x(\omega, t)-z(t))\cdot\omega=\lambda(t)\rho (\omega, \tau)$
to obtain $a(\tau) $ and $\xi(\omega, \tau)$  s.t. $\rho(\omega, \tau)=\rho_{a(\tau)}+\xi(\omega, \tau)$, with $\xi \perp \omega^j,\ j=0, \dots, n+1$. We repeat this procedure on the interval $[\delta, \delta+\delta']$ with $\bar z(t):=z(\delta)$ and so forth.
This gives $T_1>0$, $z(t(\tau))$, $\rho(\omega,\tau)$, $a(\tau)$ and $\xi(\omega,\tau)$, $\tau\leq T_1$, s.t.
$x(\omega,t)=z(t)+\lambda(t)\rho(\omega,\tau(t))$ and $\rho(\omega,\tau)=\rho_{a(\tau)}+\xi(\omega,\tau)$, with $\rho$ and $\lambda$
satisfying \eqref{eqnrho} and $\xi\perp \omega^j$, $j=0, \cdots, n+1$.

Now, let
$$T=\sup\{\tau>0:
\Lambda_k(\xi(\tau))^{\frac{1}{2}}+\Lambda_k(\xi(\tau))^k\leq \frac{1}{5C}, \ |a(\tau)-n|\leq\frac{1}{2}\}.$$
By continuity, $T>0$. Assume $T<\infty$. Then $\forall \tau\leq T$ we have by Proposition \ref{diffineq} that $\p_\tau
\Lambda_k(\xi)\leq -\frac{a}{n}\Lambda_k(\xi)$.
We integrate this equation to obtain
$\Lambda_k(\xi)\leq \Lambda_k(\xi_0)e^{-\int_0^\tau \frac{a(s)}{n}ds}\leq
\Lambda_k(\xi_0)e^{-(1-\frac{1}{2n})\tau} \leq \Lambda_k(\xi_0)$.
This implies
$$\Lambda_k(\xi(T))^{\frac{1}{2}}+\Lambda_k(\xi(T))^k\leq
\Lambda_k(\xi_0)^{\frac{1}{2}}+\Lambda_k(\xi_0)^k\leq \frac{1}{10C}.$$
Moreover, by \eqref{a_tau},

Hence $|a(T)-n|\leq\frac{1}{4}$, and therefore proceeding as above we see that there exists $\delta>0$ such that $\Lambda_k(\xi(t))^{\frac{1}{2}}+\Lambda_k(\xi(t))^k\leq \frac{1}{5C}$ and $|a(t)-n|\leq\frac{1}{2}$, for $t\leq T+\delta$, a contradiction! So $T=\infty$ and
$\Lambda_k(\xi)\leq \Lambda_k(\xi_0)e^{-\int_0^\tau \frac{a(s)}{n}ds}.$ By
Proposition \ref{Hnorm} we know that $\|\xi\|^2_{H^k}\lesssim
\Lambda_k(\xi_0)e^{-\int_0^\tau \frac{a(s)}{n}ds}$.
\begin{align}\label{est_a}
|a(\tau)^{-\frac{1}{2}}-a(0)^{-\frac{1}{2}}|&\leq \frac{1}{2}\int_0^\tau
|a(s)^{-\frac{3}{2}}a_\tau(s)|ds\lesssim \int_0^\tau \Lambda_k(\xi)ds \notag\\ &\leq \Lambda_k(\xi_0)\int_0^\tau
e^{-(1-\frac{1}{2n})s}ds\ll 1,
\end{align}
and by \eqref{z_tau}
\begin{align}\label{est_z}
|z(\tau)-z(0)|&\leq \int_0^\tau
|z_\tau(s)|ds\lesssim \int_0^\tau \lambda(s)\Lambda_k(\xi)ds  \notag\\ &\leq \Lambda_k(\xi_0)\int_0^\tau
e^{-(n+\frac{1}{2}-\frac{1}{2n})s}ds\ll 1.
\end{align}
Observe that $\lambda_0^2-\lambda(t)^2=2\int_0^t a(\tau(s))ds$. Let $t_*$ be the zero of the function
$\lambda_0^2-2\int_0^t a(\tau(s))ds$. Since $|a(\tau)-n| \le \frac{1}{2}$, we have $t_* < \infty$ and $\lambda(t)\rightarrow 0$ as $t\rightarrow t_*$. Similarly to
\eqref{est_a}, we know that
$$|a(\tau_2)^{-1/2}-a(\tau_1)^{-1/2}|\lesssim \int_{\tau_1}^{\tau_2} e^{-(1-\frac{1}{2n})s}ds \rightarrow 0,$$
as $\tau_1, \tau_2\rightarrow \infty$. Hence there exists $a_*>0$, such that $|a(\tau)-a_*|\lesssim e^{-(1-\frac{1}{2n})\tau}$.
Similar arguments show that there exists $z_* \in \R^{n+1}$ such that $|z(\tau)-z_*|\lesssim e^{-(n+\frac{1}{2}-\frac{1}{2n})\tau}$.
Then $\lambda^2=\lambda_0^2-2\int_0^t a(\tau(s))ds=2\int_t^{t_*}a(\tau(s))ds=2a_*(t_*-t)+o(t_*-t)$.
The latter relation implies that $\tau=\int_0^t \frac{ds}{\lambda(s)^2}=\frac{1}{2a_*}\int_0^t \frac{ds}{(t_*-s)(1+o(1))}$ and therefore
$e^{-(1-\frac{1}{2n})\tau}=O((t_*-t)^{\frac{1}{2a_*}(1-\frac{1}{2n})})$.
So $\lambda(t)=\sqrt{2a_*(t_*-t)}+O((t_*-t)^{\frac{1}{2}+\frac{1}{2a_*}(1-\frac{1}{2n})})$, $\rho(\omega,\tau(t))
=\sqrt{\frac{n}{a(\tau(t))}}+\xi(\omega,\tau(t))$,
and $\|\xi(\omega,\tau(t))\|_{H^k}\lesssim (t_*-t)^{\frac{1}{2n}}$.
The latter inequality with $k=s$, together with estimates on $a, z$ and $\lambda$ obtained above
and the relation $R(\omega,t)=\lambda(t)\rho(\omega,t)$, proves Theorem \ref{thmrho}.

\appendix
\section*{Appendix A: Proof of Proposition \ref{prop:eqnrho} } 

We rewrite \eqref{eqn:y} as
\begin{equation}\label{eqn:y'}
\frac{\p y}{\p \tau}\cdot\nu(y) =-H(y)+ay\cdot\nu(y)-\lambda^{-1}\frac{\p z}{\p \tau}\cdot\nu(y).
\end{equation}
Recall that in our representation $y=\rho(\omega, \tau)\omega,\ \omega \in S^n$. 
We extend $\rho$ to $\R^{n+1}\setminus\{0\}$ by $\tilde{\rho}(x,\tau)=\rho(\alpha(x), \tau)$, where
$\alpha: \R^{n+1}\rightarrow S^n,\
\alpha(x) := \hat{x}=\frac{x}{|x|}$ . Then $y=\tilde\rho(x, \tau)\hat x$ and we can write $\tilde M_\tau=\{x\in \R^{n+1}:
\varphi(x,\tau)=0\}$, where $\varphi(x,\tau)=|x|-\tilde{\rho}(x,\tau)$. Now,
$\nu(y) =\frac{\nabla_x\varphi}{|\nabla_x\varphi|}$ and $\tilde H:=\divv_x(\frac{\nabla_x\varphi}{|\nabla_x\varphi|})$, where $\nabla_x$ is the standard gradient in $x$.  Therefore
\begin{equation}\label{eqntilderho}
\frac{\p \tilde{\rho}}{\p \tau}=\tilde{G}(\tilde{\rho})+a\tilde \rho-\lambda^{-1}z_\tau\cdot(\hat{x}-\nabla_x\tilde{\rho})\ \quad \mbox{on}\ \quad \tilde M_\tau,
\end{equation}
where 
$\tilde{G}(\tilde{\rho})=-\tilde H\sqrt{1+|\nabla_x\tilde{\rho}|^2}$.

\DETAILS{$$XXXX $$
\textbf{Old version.}

We extend $\rho$ to
$\R^{n+1}\setminus\{0\}$ by
$\tilde{\rho}(x,t)=\rho(\alpha(x) ,t)$, where
$\alpha: \R^{n+1}\rightarrow \Omega,\
\alpha(x) := \hat{x}=\frac{x}{|x|}$ . Then we can write $S_t=\{y\in \R^{n+1}:
\varphi(y,t)=0\}$, where $\varphi(y,t)=|y-z(t)|-\tilde{\rho}(y-z(t),t)$. Then
\eqref{MCF} is equivalent to
\begin{equation}\label{eqnphi}
\p_t \varphi=\tilde H|\nabla_y\varphi|\ \mbox{on}\ S_t,
\end{equation}
where $\nabla_y$ is the standard gradient in $y$ and $\tilde H:=div_y(\frac{\nabla_y\varphi}{|\nabla_y\varphi|})$. Let $x=y-z(t)$. We compute
$\p_t\varphi=-\dot{z}(t)\cdot \hat{x}+\nabla_x\tilde{\rho}\cdot{\dot{z}}(t)
-\p_t\tilde{\rho}$,
$\nabla_y\varphi=\hat{x}-\nabla_x\tilde{\rho}$ and therefore
\begin{equation}\label{eqntilderho}
\p_t\tilde{\rho}=\tilde{G}(\tilde{\rho})-\dot{z}(t)\cdot\hat{x}+
\dot{z}(t)\cdot\nabla_x\tilde{\rho}\ \mbox{on}\ S_t,
\end{equation}
where
$\tilde{G}(\tilde{\rho})=-\tilde H\sqrt{1+|\nabla_x\tilde{\rho}|^2}$.

$$XXXXXX$$}

Let $\Hess_{x}$ be the operator-valued matrix with the entries $(\Hess_{x})_{ij}:=\p_{x_i}\p_{x_j}$. We compute $\tilde H$:
\begin{align}\label{eqnH}
\tilde H &=\divv(\frac{\hat{x}-\nabla_x\tilde{\rho}}{\sqrt{1+|\nabla_x\tilde{\rho}|^2}})
=\frac{\frac{n}{|x|}-\Delta_x\tilde{\rho}}{\sqrt{1+|\nabla_x\tilde{\rho}|^2}}
\notag \\ &+\frac{-\frac{1}{2}\hat{x}\cdot\nabla_x|\nabla_x\tilde{\rho}|^2+\nabla_x\tilde{\rho}\cdot \Hess_x(\tilde{\rho})
\nabla_x\tilde{\rho}}{(1+|\nabla_x\tilde{\rho}|^2)^{3/2}}.
\end{align}
Since $\tilde{\rho}(\lambda x)=\tilde{\rho}(x)$, we have that
$x\cdot\nabla_x\tilde{\rho}=0$. Differentiating this equation with
respect to $x_i$ we find that
$x\cdot\nabla_x\p_{x_i}\tilde{\rho}=-\p_{x_i}\tilde{\rho}$, and
therefore
$x\cdot\nabla_x|\nabla_x\tilde{\rho}|^2=2|\nabla_x\tilde{\rho}|^2$.
Plugging this into \eqref{eqnH} gives \begin{equation}\label{eqnH1}
\tilde H=\frac{\frac{n}{|x|}-\Delta_x\tilde{\rho}}{\sqrt{1+|\nabla_x\tilde{\rho}|^2}}
+\frac{-\frac{1}{|x|}|\nabla_x\tilde{\rho}|^2+\nabla_x\tilde{\rho}\cdot \Hess_x(\tilde{\rho})
\nabla_x\tilde{\rho}}{(1+|\nabla_x\tilde{\rho}|^2)^{3/2}}.
\end{equation}

Let $r=|x|$. We note first that due to the well-known
representation (see \cite{chavel})
\begin{equation}\label{laplace}
\Delta_x=r^{-n}\p_r
r^n\p_r+\frac{1}{r^2}\Delta\ \mbox{on}\ \R^{n+1},
\end{equation}
we have that
$\Delta_x\tilde{\rho}\mid_{M_t}=\frac{1}{\tilde{\rho}^2}\Delta.$
Furthermore, since $|x|\frac{\p u^j}{\p x^i}$ is homogeneous of degree $0$, $\p_{x^i}\tilde{\rho}=\frac{\p u^j}{\p x^i}\frac{\p \rho}{\p u^j}=\frac{1}{|x|}\frac{\p u^j}{\p x^i}|_{S^n} g_{jk}\nabla^k\rho
=\frac{1}{|x|}\frac{\p u^j}{\p x^i}|_{S^n}\frac{\p x^m}{\p u^j}\frac{\p x^m}{\p u^k}\nabla^k \rho=\frac{1}{|x|}\frac{\p x^i}{\p u^k}\nabla^k\rho$ and
therefore $\nabla_x\tilde{\rho}\cdot{z}_\tau=\frac{1}{|x|}{z}^i_\tau\frac{\p x^i}{\p u^k}\nabla^k\rho$.
Next, we need the following lemma which is proved below. 
\begin{lemma}\label{lemma:computation}
\begin{equation}\label{nabla}
|\nabla_x\tilde{\rho}|^2=\frac{1}{|x|^2}|\nabla\rho|^2,
\end{equation}
\begin{equation}\label{hessian}
\nabla_x\tilde{\rho}\cdot
\Hess_x(\tilde{\rho})\nabla_x\tilde{\rho}
=\frac{1}{|x|^4}(\nabla\rho\cdot \Hess(\rho)\nabla
\rho).
\end{equation}
\end{lemma}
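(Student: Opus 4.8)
The plan is to notice first that \eqref{nabla} and \eqref{hessian} are scaling identities, and then reduce everything to a computation on the unit sphere. Since $\tilde\rho$ is homogeneous of degree $0$, the Euclidean gradient $\nabla_x\tilde\rho$ is homogeneous of degree $-1$ and the Euclidean Hessian $\Hess_{x}(\tilde\rho)$ of degree $-2$; hence the left-hand side of \eqref{nabla} is homogeneous of degree $-2$ in $x$ and that of \eqref{hessian} of degree $-4$, matching the homogeneities $|x|^{-2}$ and $|x|^{-4}$ of the right-hand sides (the intrinsic quantities $|\nabla\rho|^2$ and $\nabla\rho\cdot\Hess(\rho)\nabla\rho$, evaluated at $\hat x\in S^n$, being homogeneous of degree $0$). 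So it suffices to prove both identities for $|x|=1$. Two structural facts carry the argument. First, differentiating $\tilde\rho(\lambda x)=\tilde\rho(x)$ in $\lambda$ at $\lambda=1$ gives $x\cdot\nabla_x\tilde\rho=0$, so at each point of $S^n$ the vector $\nabla_x\tilde\rho$ is tangent to $S^n$. Second, since $\tilde\rho|_{S^n}=\rho$, for every $V$ tangent to $S^n$ one has $\langle\nabla_x\tilde\rho,V\rangle=V(\rho)$, so on $S^n$ the vector $\nabla_x\tilde\rho$ is exactly the intrinsic gradient of $\rho$; in coordinates this is the relation $\p_{x^i}\tilde\rho=\frac{\p x^i}{\p u^k}\nabla^k\rho$ on $S^n$ recorded just above. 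From the latter, \eqref{nabla} follows at once: on $S^n$, $|\nabla_x\tilde\rho|^2=\frac{\p x^i}{\p u^k}\frac{\p x^i}{\p u^l}\nabla^k\rho\,\nabla^l\rho=g_{kl}\nabla^k\rho\,\nabla^l\rho=|\nabla\rho|^2$ (with $i$ summed), and the general case follows by the degree $-2$ scaling.

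For \eqref{hessian} the key point is that its left-hand side is the Euclidean Hessian bilinear form of $\tilde\rho$ evaluated on the pair of \emph{tangent} vectors $(\nabla_x\tilde\rho,\nabla_x\tilde\rho)$. I would invoke the Gauss formula for the Hessian of a restricted function: for a hypersurface $N\subset\R^{n+1}$ with unit normal $\nu$ and $V,W\in T_pN$,
\begin{equation*}
\Hess_{\R^{n+1}}(\tilde\rho)(V,W)=\Hess_{N}(\tilde\rho|_{N})(V,W)+\big(\nu\cdot\nabla_x\tilde\rho\big)\,\mathrm{II}_{N}(V,W),
\end{equation*}
where $\mathrm{II}_N$ is the scalar second fundamental form of $N$. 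Taking $N=S^n$, $\nu=\hat x$, and using $\hat x\cdot\nabla_x\tilde\rho=0$ from the first structural fact, the correction term drops out, leaving $\Hess_{\R^{n+1}}(\tilde\rho)(V,W)=\Hess_{S^n}(\rho)(V,W)$ for all tangent $V,W$. Applying this with $V=W=\nabla_x\tilde\rho$ and the second structural fact (the tangent vector $\nabla_x\tilde\rho$ has components $\nabla^k\rho$), we obtain on $S^n$
\begin{equation*}
\nabla_x\tilde\rho\cdot\Hess_{x}(\tilde\rho)\,\nabla_x\tilde\rho=\Hess_{S^n}(\rho)(\nabla\rho,\nabla\rho)=\nabla^k\rho\,\nabla^l\rho\,(\Hess\rho)_{kl}=\nabla\rho\cdot\Hess(\rho)\nabla\rho,
\end{equation*}
and \eqref{hessian} follows by the degree $-4$ scaling.

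The main obstacle, such as it is, will be the sign and convention bookkeeping in the Gauss formula — in particular verifying that the correction term is genuinely proportional to the normal component $\hat x\cdot\nabla_x\tilde\rho$, which is exactly where the $0$-homogeneity of $\tilde\rho$ is used. If one prefers to avoid differential-geometric machinery, the alternative is a direct coordinate computation: write $\p_{x^i}\p_{x^j}\tilde\rho=\frac{\p^2 u^m}{\p x^i\p x^j}\p_m\rho+\frac{\p u^k}{\p x^i}\frac{\p u^l}{\p x^j}\p_k\p_l\rho$, contract with $\nabla_x\tilde\rho$ on both indices (the term coming from $\p_{x^j}(|x|^{-1})$ dropping because $\hat x\cdot\nabla_x\tilde\rho=0$), and identify the result. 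The second term yields $|x|^{-4}\nabla^k\rho\,\nabla^l\rho\,\p_k\p_l\rho$ directly, so the remaining computational core is the identity $(\p_{x^i}\tilde\rho)(\p_{x^j}\tilde\rho)\,\p^2_{x^ix^j}u^m=-|x|^{-4}\,\Gamma^m_{kl}\,\nabla^k\rho\,\nabla^l\rho$ on $S^n$, which one gets by differentiating the relation between $\p u/\p x$ and the metric $g$ of $S^n$ and comparing with the definition of $\Gamma^m_{kl}$; this reconstitutes exactly the covariant correction $-\Gamma^m_{kl}\p_m\rho$ in $(\Hess\rho)_{kl}$.
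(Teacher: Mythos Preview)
Your argument is correct and takes a genuinely different route from the paper. The paper proceeds by a direct coordinate computation: it introduces $\tilde g^{kl}:=\frac{\p u^k}{\p x^i}\frac{\p u^l}{\p x^i}$, proves the key algebraic identity $\tilde g^{ij}(x)g_{jk}(u)=|x|^{-2}\delta_{ik}$, and then grinds out both formulas by repeated use of the chain rule, splitting the Hessian contraction into a term $A=|x|^{-4}\nabla^l\rho\,\p_l\p_k\rho\,\nabla^k\rho$ and a term $B$ which, after several manipulations of $\p_{u^l}\tilde g^{kn}$, is identified with $-|x|^{-4}\Gamma^p_{rs}\p_p\rho\,\nabla^r\rho\,\nabla^s\rho$. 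Your approach replaces all of this by two geometric observations: the homogeneity reduction to $|x|=1$, and the Gauss formula $\Hess_{\R^{n+1}}(\tilde\rho)(V,W)=\Hess_{S^n}(\rho)(V,W)\pm(\hat x\cdot\nabla_x\tilde\rho)\,\mathrm{II}(V,W)$ for tangent $V,W$, whose correction term vanishes precisely because $\tilde\rho$ is $0$-homogeneous. This is shorter and more conceptual, and it explains \emph{why} the Christoffel correction in $(\Hess\rho)_{kl}$ appears automatically: it is exactly the difference between the ambient and intrinsic connections, which the normal derivative would otherwise measure. The paper's approach, on the other hand, is entirely self-contained and avoids invoking any submanifold machinery; your ``alternative'' coordinate sketch at the end is essentially the paper's proof, with your claimed identity $(\p_{x^i}\tilde\rho)(\p_{x^j}\tilde\rho)\,\p^2_{x^ix^j}u^m=-|x|^{-4}\Gamma^m_{kl}\nabla^k\rho\,\nabla^l\rho$ being precisely the content of the paper's computation of the term $B$.
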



This lemma, together with equations $H=\tilde H|_{\tilde{M}_\tau}$, \eqref{eqnH1} and \eqref{laplace}, gives
$H(\rho) := \frac{-\rho}{\sqrt{\rho^2+|\nabla\rho|^2}}G(\rho)$
and therefore $\tilde G(\tilde \rho)|_{\tilde{M}_\tau} =G(\rho)$.
This, together with \eqref{eqntilderho}, gives \eqref{eqnrho} - \eqref{G}. 
Hence if $\tilde M_\tau$, defined by the immersion $y(\omega, \tau)=\rho(\omega, \tau)\omega$ of $S^n$,
satisfies \eqref{eqn:y} 
then $\rho(\tau)$ satisfies \eqref{eqnrho} - \eqref{G}.
Reversing the steps we see that if $\rho$ satisfies \eqref{eqnrho} - \eqref{G}, then 
the immersion $y(\omega, \tau)=\rho(\omega, \tau)\omega$ satisfies \eqref{eqn:y}.
$\Box$

\DETAILS{
For $n=1$ we write $\rho=\rho(\theta,t)$, where $0 \leq\theta\le 2\pi$, with the periodic boundary conditions.
Then using that $\theta=\arctan\frac{x_2}{x_1}$, we obtain
\begin{equation}
G(\rho)=\frac{\rho_{\theta\theta}-\frac{\rho_\theta^2}{\rho}}{\rho^2+\rho_\theta^2}-\frac{1}{\rho}.
\end{equation}

Similarly for $n=2$ we write $\rho=\rho(\theta,\psi,t)$, where $0 \leq \theta \le 2\pi$ and $0\leq \psi \leq \pi$,
with the periodic boundary conditions. Then using that $\theta=\arctan\frac{x_2}{x_1}$
and $\psi=\arctan\frac{\sqrt{x_1^2+x_2^2}}{x_3}$, we obtain
\begin{equation}
\begin{array}{ll}
G(\rho)&=\frac{1}{\rho^2}\rho_{\theta\theta}+\frac{\cos\theta}{\rho^2\sin\theta}\rho_\theta
+\frac{1}{\rho^2\sin^2\theta}\rho_{\psi\psi}-\frac{2}{\rho}
-\frac{\rho_\theta^2\sin^2\theta+\rho_\psi^2}{\rho(\rho^2\sin^2\theta+\rho_\theta^2\sin^2\theta+\rho_\psi^2)}\\
&-\frac{\rho_{\theta\theta}\rho_\theta^2\sin^4\theta+\rho_{\psi\psi}\rho_\psi^2+2\rho_{\theta\psi}\rho_\theta\rho_\psi\sin^2\theta-
\rho_\theta\rho_\psi^2\sin\theta\cos\theta}{\rho^2\sin^2\theta(\rho^2\sin^2\theta+\rho_\theta^2\sin^2\theta+\rho_\psi^2)}.
\end{array}
\end{equation}
}

\DETAILS{
\textbf{The next section is not needed anymore. It is superseded by Section \ref{sec:rescSurf}. Now, the notation $\rho$ and $v$ denote the same object!}
\section{Collapse Variables}\label{sec:collapsevariable}
In the following we will consider equation \eqref{eqnrho}.
We rescale equation \eqref{eqnrho} as
\begin{equation*}
v(\omega,\tau)=\lambda^{-1}(t)\rho(\omega, t),
\ \text{where} \ \tau=\int_0^t \lambda^{-2}(s) ds.
\end{equation*}
Using that $\frac{\p \rho}{\p t}=
\dot{\lambda}v+\lambda\frac{\p v}{\p \tau}\frac{\p \tau}{\p t}=\dot{\lambda}v+\lambda^{-1}\frac{\p v}{\p \tau},\ \lambda^2\dot{z}=\p_\tau z \equiv z_\tau$
and $G(\lambda v)=\lambda^{-1}G(v)$ we obtain from
\eqref{eqnrho}
\begin{equation}\label{eqn_v}
\frac{\p v}{\p \tau}=J(v)+\tilde{z}_\tau\cdot\nabla v-\lambda^{-1}z_\tau\cdot\omega,\ \text{where}\ J(v)=G(v)+av\ \text{and}\ a=-\lambda\dot{\lambda}.
\end{equation}
Observe that ($a=$  a positive constant, $v_a:=\sqrt{\frac{n}{a}}$) is a
static solution to \eqref{eqn_v}.
}

\begin{proof}[Proof of Lemma \ref{lemma:computation}]
Recall the notation $\alpha(x)=\hat{x}=\frac{x}{|x|}$.
Let $\beta: U\rightarrow\R^{n+1}$ be a local parametrization of
$\Omega$, and we denote $\rho$ in the local coordinates, i.e. $\rho\circ\beta$, again as  $\rho: U\rightarrow\R$.
We write  $\tilde{\rho} :=\rho\circ\alpha= \rho \circ \beta \circ\beta^{-1} \circ \alpha$, which we rewrite as
$\tilde{\rho} = \rho \circ \sigma$, where $\sigma:=\beta^{-1} \circ \alpha: \R^{n+1}\rightarrow U$.
Now, writing $u=u(x)\equiv\sigma(x)$, 
we define $\frac{\p u^k}{\p x^i}\frac{\p u^l}{\p
x^i}=: \tilde{g}^{kl}$, where we use the convention of summing over repeated indices. We claim
\begin{equation}\label{eqn:g}
\tilde{g}^{ij}(x)g_{jk}(u)=\frac{1}{|x|^2}\delta_{ik}.
\end{equation}
Indeed, since $\beta(\sigma(x))=\alpha(x)$, we have
\begin{equation}\label{eqn:alpha}
(\frac{\p x^i}{\p u^m}\circ \sigma) \frac{\p u^m}{\p x^j}=
\frac{\p \alpha^i}{\p x^j}=\frac{1}{|x|}(\delta_{ij}-\frac{x^ix^j}{|x|^2}).
\end{equation}
Note that $\sigma$ is homogeneous of degree $0$, so $x\cdot\nabla_x\sigma=0$. This together with
\eqref{eqn:alpha} implies that
\begin{equation}\label{eqn:g1}
\begin{array}{ll}
&\tilde{g}^{ij}(x)g_{jk}(u) =\frac{\p u^i}{\p x^m}\frac{\p u^j}{\p
x^m}(\frac{\p x^n}{\p u^j}\circ\sigma)(\frac{\p x^n}{\p u^k}\circ\sigma)\\
=&\frac{1}{|x|}\frac{\p u^i}{\p x^m}(\delta_{mn}-\frac{x^mx^n}{|x|^2})\frac{\p x^n}{\p u^k}
=\frac{1}{|x|}\frac{\p u^i}{\p x^m}\frac{\p x^m}{\p u^k}.
\end{array}
\end{equation}
Since $|x|\frac{\p u^i}{\p x^m}$ is homogeneous of degree $0$, we have that $|x|\frac{\p u^i}{\p x^m}=
\frac{\p u^i}{\p x^m}|_{\Omega}$, and therefore $\frac{\p u^i}{\p x^m}=\frac{1}{|x|}(\frac{\p u^i}{\p x^m}|_{\Omega})$.
Using $\sigma\circ\beta=1_U$, we compute that $(\frac{\p \sigma^i}{\p x^j}\circ\beta)\frac{\p \beta^j}{\p u^k}=\delta_{ik}$,
which is equivalent to $\frac{\p u^i}{\p x^j}|_\Omega (\frac{\p x^j}{\p u^k}\circ\sigma)=\delta_{ik}$. This gives us
\begin{equation}\label{eqn:g2}
\frac{\p u^i}{\p x^m}\frac{\p x^m}{\p u^k}=\frac{1}{|x|}(\frac{\p u^i}{\p x^m}|_\Omega)\frac{\p x^m}{\p u^k}=\frac{1}{|x|}\delta_{ik}.
\end{equation}
\eqref{eqn:g1} and \eqref{eqn:g2} imply the equation \eqref{eqn:g}.

Using the relations $\frac{\p \rho}{\p u^i}=g_{ij}\nabla^j \rho$ (this follows from the definition of $\nabla \rho$),
$\frac{\p \tilde{\rho}}{\p x^i}=\frac{\p u^j}{\p x^i}\frac{\p\rho}{\p u^j}$, and \eqref{eqn:g} we compute
\begin{equation}
\begin{array}{ll}
&|\nabla_x\tilde{\rho}|^2=\frac{\p \tilde{\rho}}{\p x^i}\frac{\p \tilde{\rho}}{\p x^i}\\
=&\frac{\p u^k}{\p x^i}\frac{\p u^l}{\p
x^i}\frac{\p \rho}{\p u^k}\frac{\p \rho}{\p u^l}=\tilde{g}^{kl}(x)\frac{\p \rho}{\p u^k}\frac{\p \rho}{\p u^l}\\
=&\tilde{g}_{kl}(x)g_{km}(u)\nabla^m\rho g_{ln}(u)\nabla^n\rho=\frac{1}{|x|^2}\nabla^l\rho g_{ln} \nabla^n\rho\\
=&\frac{1}{|x|^2}|\nabla\rho|^2.
\end{array}
\end{equation}
This gives \eqref{nabla}.

Now we prove \eqref{hessian}. We have
\begin{equation}\label{eqn:AB}
\begin{array}{ll}
&\nabla_x\tilde{\rho}\cdot  \Hess_x(\tilde{\rho})\nabla_x\tilde{\rho}
=\frac{\p \tilde{\rho}}{\p x^i}\frac{\p^2 \tilde{\rho}}{\p x^i \p x^j}\frac{\p \tilde{\rho}}{\p x^j}\\
&=\frac{\p u^m}{\p x^i}\frac{\p\rho}{\p u^m}\frac{\p u^l}{\p
x^i}\frac{\p}{\p u^l}(\frac{\p u^k}{\p x^j}\frac{\p\rho}{\p
u^k})\frac{\p u^n}{\p x^j}\frac{\p\rho}{\p u^n}\\
&=\tilde{g}^{ml}\frac{\p\rho}{\p u^m}\frac{\p}{\p u^l}(\frac{\p u^k}{\p
x^j}\frac{\p\rho}{\p u^k})\frac{\p u^n}{\p x^j}\frac{\p\rho}{\p u^n}\\
&=\tilde{g}^{ml}\tilde{g}^{kn}\frac{\p\rho}{\p u^m}\frac{\p^2\rho}{\p
u^lu^k}\frac{\p\rho}{\p u^n}+\tilde{g}^{ml}\frac{\p\rho}{\p u^m}\frac{\p}{\p
u^l}(\frac{\p u^k}{\p x^j})\frac{\p\rho}{\p u^k}\frac{\p u^n}{\p
x^j}\frac{\p\rho}{\p u^n}\\
&=: A+B,
\end{array}
\end{equation}
Then
\begin{equation}\label{eqn:A}
A=\tilde{g}^{ml}\tilde{g}^{kn}g_{mp}\nabla^p\rho\frac{\p^2 \rho}{\p u^l \p u^k}g_{nq}\nabla^q \rho
=\frac{1}{|x|^4}\nabla^l\rho\frac{\p^2 \rho}{\p u^l \p u^k}\nabla^k \rho
\end{equation}
and
\begin{equation*}
\begin{array}{ll}
B&=\frac{1}{2}\tilde{g}^{ml}\frac{\p\rho}{\p u^m}\frac{\p\rho}{\p u^k}\frac{\p
\rho}{\p u^n}\frac{\p}{\p u^l}(\frac{\p u^k}{\p x^j})\frac{\p
u^n}{\p x^j}+\frac{1}{2}\tilde{g}^{ml}\frac{\p\rho}{\p u^m}\frac{\p\rho}{\p
u^n}\frac{\p \rho}{\p u^k}\frac{\p}{\p u^l}(\frac{\p u^n}{\p
x^j})\frac{\p u^k}{\p x^j}\\
&=\frac{1}{2}\tilde{g}^{ml}\frac{\p\rho}{\p u^m}\frac{\p\rho}{\p
u^k}\frac{\p \rho}{\p u^n}\frac{\p}{\p u^l}(\frac{\p u^k}{\p
x^j}\frac{\p u^n}{\p x^j})\\
&=\frac{1}{2}\tilde{g}^{ml}\frac{\p\rho}{\p u^m}\frac{\p\rho}{\p
u^k}\frac{\p \rho}{\p u^n}\frac{\p \tilde{g}^{kn}}{\p u^l}.
\end{array}
\end{equation*}
Now, by $\frac{\p \rho}{\p u^i}=g_{ij}\nabla^j\rho$, $B=B_1=B_2=B_3$, where
\begin{equation*}
\begin{array}{l}
B_1=\frac{1}{2}\tilde{g}^{ml}g_{mr}g_{ks}\frac{\p \tilde{g}^{kn}}{\p u^l}\frac{\p \rho}{\p u^n}(\nabla\rho)^r(\nabla\rho)^s
=\frac{1}{2|x|^2}g_{ks}\frac{\p \tilde{g}^{kn}}{\p u^r}\frac{\p \rho}{\p u^n}(\nabla\rho)^r(\nabla\rho)^s,\\
B_2=\frac{1}{2}\tilde{g}^{ml}g_{ms}g_{nr}\frac{\p \tilde{g}^{kn}}{\p u^l}\frac{\p \rho}{\p u^k}(\nabla\rho)^r(\nabla\rho)^s
=\frac{1}{2|x|^2}g_{nr}\frac{\p \tilde{g}^{kn}}{\p u^s}\frac{\p \rho}{\p u^k}(\nabla\rho)^r(\nabla\rho)^s,\\
B_3=\frac{1}{2}\tilde{g}^{ml}g_{kr}g_{ns}\frac{\p \tilde{g}^{kn}}{\p u^l}\frac{\p \rho}{\p u^m}(\nabla\rho)^r(\nabla\rho)^s.
\end{array}
\end{equation*}
Hence
\begin{equation}\label{eqn:B}
B=-\frac{1}{|x|^4}\Gamma_{rs}^p\frac{\p \rho}{\p u^p}\nabla^r\rho\nabla^s\rho,
\end{equation} where
$\Gamma_{rs}^p =-\frac{|x|^2}{2}
(g_{ks}\frac{\p \tilde{g}^{kp}}{\p u^r}+g_{nr}\frac{\p \tilde{g}^{pn}}{\p u^s}-|x|^2\tilde{g}^{pl}g_{kr}g_{ns}\frac{\p \tilde{g}^{kn}}{\p u^l}).$
Using that $$\frac{\p}{\p u^r}(g_{ks}\tilde{g}^{kp})=\frac{\p}{\p u^r}(\frac{1}{|x|^2}\delta_{sp})=0$$ (points $x\in\R^{n+1}$
are parameterized by $\beta(u)$ and $|x|$), we compute
$g_{ks}\frac{\p \tilde{g}^{kp}}{\p u^r}=\frac{\p}{\p u^r}(g_{ks}\tilde{g}^{kp})-\tilde{g}^{kp}\frac{\p g_{ks}}{\p u^r}
=-\tilde{g}^{kp}\frac{\p g_{ks}}{\p u^r}$. This gives
\begin{equation*}
\begin{array}{ll}
\Gamma_{rs}^p &=  \frac{|x|^2}{2}(\tilde{g}^{kp}\frac{\p g_{ks}}{\p u^r}+\tilde{g}^{kp}\frac{\p g_{kr}}{\p u^s}-
|x|^2\tilde{g}^{pl}g_{kr}\tilde{g}^{kn}\frac{\p g_{ns}}{\p u^l})\\
&= \frac{|x|^2}{2}(\tilde{g}^{kp}\frac{\p g_{ks}}{\p u^r}+\tilde{g}^{kp}\frac{\p g_{kr}}{\p u^s}-
\tilde{g}^{pk}\frac{\p g_{rs}}{\p u^k}).\\
\end{array}
\end{equation*}
Since $|x|=1$, and therefore $\tilde{g}^{pk}=g^{kp}$ on $\Gamma$, we have that on $\Gamma$
$$\Gamma_{rs}^p=\frac{1}{2}g^{kp}(\frac{\p g_{ks}}{\p u^r}+\frac{\p g_{kr}}{\p u^s}-
\frac{\p g_{rs}}{\p u^k}),$$
which coincides with our definition for $\Gamma_{rs}^p$ at the beginning of Section \ref{sec:eqnrho}.

Equations \eqref{eqn:AB}, \eqref{eqn:A} and \eqref{eqn:B} give \eqref{hessian}.
This finishes the proof of the lemma.
\end{proof}


\section*{Appendix B: Proof of  \eqref{N-pro}}
\begin{lemma}\label{lemma:N}Let $k>\frac{n}{2}+1$ and assume that $|\xi| \le \frac{1}{2}v_a$. Then
\begin{equation}\label{N-pro'}
\|L_a^{\frac{k-1}{2}}N (\xi)\|\lesssim (\Lambda_k^{1/2}(\xi)+\Lambda_k^k(\xi))\|L_a^{\frac{k+1}{2}}\xi\|.
\end{equation}
\end{lemma}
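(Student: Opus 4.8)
The plan is to reduce \eqref{N-pro'} to the purely Sobolev estimate
\begin{equation}\label{N-sobolev}
\|N(\xi)\|_{H^{k-1}}\lesssim\big(\|\xi\|_{H^k}+\|\xi\|_{H^k}^{2k}\big)\|\xi\|_{H^{k+1}}
\end{equation}
and then to establish \eqref{N-sobolev} by Moser‑type inequalities on $S^n$. For the reduction: since $a\in[n-\tfrac12,n+\tfrac12]$ and $L_a$ differs from $\tfrac an(1-\Delta)$ by a bounded scalar operator, $L_a$ is positive definite on the orthogonal complement of $\Span\{\omega^0,\dots,\omega^{n+1}\}$ (the finite‑dimensional subspace where it is $\le 0$); defining the powers $L_a^{(k\mp1)/2}$ to commute with the projection onto that subspace, they are pseudodifferential operators of order $k\mp1$ with bounds uniform in $a$, so $\|L_a^{(k-1)/2}N(\xi)\|\lesssim\|N(\xi)\|_{H^{k-1}}$. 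On the other hand, Proposition~\ref{Hnorm} together with the orthogonality \eqref{xiorthog} (in force throughout) give, uniformly in $a$, $\Lambda_k^{1/2}(\xi)\simeq\|\xi\|_{H^k}$, $\Lambda_k^{k}(\xi)\simeq\|\xi\|_{H^k}^{2k}$, and $\|L_a^{(k+1)/2}\xi\|=\sqrt2\,\Lambda_{k+1}^{1/2}(\xi)\simeq\|\xi\|_{H^{k+1}}$, so \eqref{N-sobolev} implies \eqref{N-pro'}.

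Next I would record the structure of $N(\xi)$. By \eqref{LNF}, with $\rho=\rho_a+\xi$, each summand of $N(\xi)$ has the form $\Phi(\xi,\nabla\xi)\,\partial^{\alpha_1}\xi\cdots\partial^{\alpha_m}\xi$ with $m\ge2$, each $|\alpha_i|\le2$, and $|\alpha_1|+\cdots+|\alpha_m|\le4$, where $\Phi$ is a rational function of $\rho_a$, $\xi$ and $\nabla\xi$ whose only denominators are powers of $\rho$ and of $\rho^2+|\nabla\xi|^2$. The point to check is that on $\{|\xi|\le\frac12\rho_a\}$ the coefficient $\Phi$ is a \emph{bounded} smooth function of $(\xi,\nabla\xi)$ with \emph{all} derivatives bounded uniformly: there $\rho\ge\frac12\rho_a\gtrsim1$ and $\rho^2+|\nabla\xi|^2\ge\rho^2\gtrsim1$, and differentiating $(\rho^2+|q|^2)^{-1}$ in the variables $q$ only improves the decay, e.g. $|\partial_q(\rho^2+|q|^2)^{-1}|\le(\rho^2+|q|^2)^{-3/2}\lesssim1$, and similarly at higher orders. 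Thus no smallness of $\nabla\xi$ is needed, only the pointwise bound $|\xi|\le\frac12\rho_a$.

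Then I would estimate each summand. Since $k-1>\frac n2$ I may use the tame product inequality $\|f_1\cdots f_m\|_{H^{k-1}}\lesssim\sum_j\|f_j\|_{H^{k-1}}\prod_{i\ne j}\|f_i\|_{L^\infty}$, the Moser estimate $\|\Phi(\xi,\nabla\xi)\|_{H^{k-1}}\lesssim1+\|\xi\|_{H^k}+\cdots+\|\xi\|_{H^k}^{k-1}$ (valid since $\Phi$ is bounded with bounded derivatives and $(\xi,\nabla\xi)\in H^{k-1}$ whenever $\xi\in H^k$), and the embeddings $H^{k-1}\hookrightarrow L^\infty$, $H^k\hookrightarrow W^{1,\infty}$, $H^{k+1}\hookrightarrow W^{2,\infty}$ (the last two being exactly where $k>\frac n2+1$ enters). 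Applying these, together with $\|\partial^\alpha\xi\|_{H^{k-1}}\lesssim\|\xi\|_{H^{k+1}}$ for $|\alpha|\le2$, $\|\partial^\alpha\xi\|_{L^\infty}\lesssim\|\xi\|_{H^k}$ for $|\alpha|\le1$ and $\|\partial^\alpha\xi\|_{L^\infty}\lesssim\|\xi\|_{H^{k+1}}$ for $|\alpha|\le2$, one bounds each summand by $\big(1+\|\xi\|_{H^k}^{k-1}\big)$ times a product of $\|\xi\|$‑norms carrying exactly one $H^{k+1}$‑factor and the rest $H^k$: the leading quadratic term $\frac{(\rho_a+\rho)\xi\,\Delta\xi}{\rho^2\rho_a^2}$ gives $(1+\|\xi\|_{H^k}^{k-1})\|\xi\|_{H^k}\|\xi\|_{H^{k+1}}$, the worst cubic term $\frac{\nabla\xi\cdot\Hess(\xi)\nabla\xi}{\rho^2(\rho^2+|\nabla\xi|^2)}$ gives $(1+\|\xi\|_{H^k}^{k-1})\|\xi\|_{H^k}^2\|\xi\|_{H^{k+1}}$, and $\frac{n\xi^2}{\rho\rho_a^2}$, $\frac{|\nabla\xi|^2}{\rho(\rho^2+|\nabla\xi|^2)}$ are no worse. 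Summing, $\|N(\xi)\|_{H^{k-1}}\lesssim(\|\xi\|_{H^k}+\|\xi\|_{H^k}^{k+1})\|\xi\|_{H^{k+1}}$; since $1\le k+1\le2k$ and $t^p\le t+t^{2k}$ for all $t\ge0$ and $1\le p\le2k$, this yields \eqref{N-sobolev}, hence \eqref{N-pro'}.

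The main obstacle I anticipate is the handling of the $\nabla\xi$‑dependent denominators $\rho^2+|\nabla\xi|^2$: one must observe that the associated coefficients $\Phi$ are uniformly $C^\infty$‑bounded over the whole range of $\nabla\xi$ (so that nothing beyond $|\xi|\le\frac12\rho_a$ is assumed), and then keep the bookkeeping tight enough that a single factor absorbs the $k+1$ derivatives while the polynomial loss in $\|\xi\|_{H^k}$ stays of degree $\le2k$, so it can be absorbed into $\Lambda_k^{1/2}(\xi)+\Lambda_k^{k}(\xi)$. The interpretation of the half‑integer powers of the indefinite operator $L_a$ is a minor technical point, handled at the outset by splitting off its finite‑dimensional nonpositive spectral subspace.
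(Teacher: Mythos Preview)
Your proof is correct and reaches the same estimate \eqref{N-sobolev} as the paper, but by a cleaner route. The paper proceeds by hand: for integer $k$ it expands $\nabla^{k-1}N(\xi)$ into monomials $\xi^t(\nabla\xi)^r\nabla^{\alpha_1}\xi\cdots\nabla^{\alpha_s}\xi$ with explicit index constraints, applies H\"older with carefully chosen exponents $p_i$, and then invokes Sobolev embeddings term by term; for non-integer $k$ it introduces an auxiliary Besov-type norm $\|f\|_{\tilde H^\beta}=\|f\|_{L^2}+\int|h|^{-n-\beta}\|\Delta_hf\|_{L^2}\,dh$, proves the embeddings $H^\beta\hookrightarrow\tilde H^\beta\hookrightarrow H^{\beta'}$, and handles the fractional part of the derivative via a commutator estimate for $[(-\Delta+1)^{\beta/2},f]$. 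Your argument replaces all of this by two off-the-shelf Moser-type inequalities (the tame product estimate in $H^{k-1}$ and the composition estimate for $\Phi(\xi,\nabla\xi)$), which work uniformly for all $k>\frac n2+1$ and avoid the integer/fractional dichotomy entirely. What you gain is brevity and a uniform treatment; what the paper's approach buys is self-containment (no appeal to Moser-type lemmas as black boxes) and an explicit accounting of exactly which Sobolev exponents are used where. Your observation that the coefficients $\Phi(\xi,\nabla\xi)$ have \emph{all} derivatives bounded uniformly in $\nabla\xi$ (because $(\rho^2+|\nabla\xi|^2)^{-1}\le\rho^{-2}$ and differentiation in $\nabla\xi$ only improves this) is the key structural point that makes the Moser composition estimate applicable without any smallness assumption on $\nabla\xi$; the paper leaves this implicit in its monomial expansion. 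The handling of the half-integer powers of the indefinite operator $L_a$ is the same minor technicality in both treatments.
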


\begin{proof}
Assume first that $k$ is an integer. Then $\|L_a^{\frac{k-1}{2}}\eta\| \simeq \|\eta\|^2_{H^{k-1}} \simeq \|\eta\|_{L^{2}} + \|\nabla^{k-1}\eta\|^2_{L^{2}} $. Now, by the expression for $\N$ in \eqref{LNF}, which we recall here,
\begin{equation}\label{N} N(\xi)=-\frac{(\rho_a+\rho)\xi\Delta\xi}{\rho^2\rho_a^2}-\frac{n\xi^2}{\rho\rho_a^2}+\frac{|\nabla\xi|^2}{\rho(\rho^2+|\nabla\xi|^2)}
-\frac{\nabla\xi\cdot Hess(\xi)\nabla\xi}{\rho^2(\rho^2+|\nabla\xi|^2)},\end{equation}
the term $|\nabla^{k-1}N(\xi)|$ is bounded above by terms of the form
$|\xi^t(\nabla\xi)^r(\nabla^{\alpha_1}\xi)\cdots (\nabla^{\alpha_s}\xi)|$,
where
\begin{equation}\label{indices} 0\leq t,\ r\leq k+1,\ 1\leq s\leq k,\ t+r+s\geq 2,\ 2\leq \alpha_1\leq \cdots \leq \alpha_s \leq k-s+2,\
\alpha_1+\cdots+\alpha_s\leq k+s.
\end{equation}
Note that the last two conditions in \eqref{indices} imply that $s \le k$. Then by H\"older's inequality we have
$$\|\nabla^{k-1}N(\xi)\|\leq \|\nabla\xi\|^r_{L^\infty}\|\nabla^{\alpha_1}\xi\|_{L^{p_1}}
\cdots \|\nabla^{\alpha_s}\xi\|_{L^{p_s}},$$
where $\frac{1}{p_1}+\cdots+\frac{1}{p_s}=\frac{1}{2}.$

Since $k>\frac{n}{2}+1$, we have, by the Sobolev embedding theorem, that $\|\xi\|_{L^\infty}+\|\nabla\xi\|_{L^\infty}\lesssim \|\xi\|_{H^k}$.
Moreover, we choose $p_i$ so that $k-\alpha_i> \frac{n}{2}-\frac{n}{p_i}$ for all $i=1, \cdots, s-1$
and $k+1-\alpha_s>\frac{n}{2}-\frac{n}{p_s}$ (this choice implies $\sum_{j=1}^{s}\alpha_j <\frac{n}{2}+1 +(k- \frac{n}{2})s$, which is compatible with \eqref{indices} ). Then, using
the Sobolev embedding theorem again, we have $\|\nabla^{\alpha_i}\xi\|_{L^{p_i}}\leq \|\xi\|_{H^k}$,
for  $i=1, \cdots, s-1$, and $\|\nabla^{\alpha_s}\xi\|_{L^{p_s}}\leq \|\xi\|_{H^{k+1}}$.  Combining these estimates gives us
$$\|L_a^{\frac{k-1}{2}}N(\xi)\|\lesssim \|\xi\|_{H^k}^{r+s-1}\|\xi\|_{H^k}.$$
Now from $1\leq r+s-1\leq 2k$ and Proposition \ref{Hnorm} we obtain \eqref{N-pro'}. 
Furthermore, one can easily check that $k$ can be taken arbitrary close to $\frac{n}{2}+1$ (this means that one is able to satisfy $1 \ge \alpha_i-\frac{n}{p_i}$, for $i=1, \cdots, s-1$, $2 \ge \alpha_s-\frac{n}{p_s}$ and $  \alpha_i\ge 2,\ \forall i$).

\DETAILS{The conditions on $k$
and $\alpha_{i}$'s guarantee the existence of such $p_i$'s: $\sum_{j=1}^{s-1}(k-\alpha_j)+k+1-\alpha_s > \sum_{j=1}^{s}(\frac{n}{2}- \frac{n}{p_j})$ which implies $ks+1- \sum_{j=1}^{s}\alpha_j > \frac{n}{2} (s-1)$. Furthermore, by \eqref{indices} and $k>\frac{n}{2} +1$, we have $ks+1- \sum_{j=1}^{s}\alpha_j  \ge ks+1-k-s > \frac{n}{2} (s-1)+s-1+1-s > \frac{n}{2} (s-1),$ which agrees with the previous inequality.}

If $k$ is not integer, we proceed as follows. Let $\beta=k-[k]\in (0,1)$. We use the space $\tilde{H}^\beta$ with the norm
$$\|f\|_{\tilde{H}^\beta}=\|f\|_{L^2}+\int \frac{dh}{|h|^{n+\beta}}\|\Delta_h f\|_{L^2},$$
where $\Delta_h f(x) = f(x+h)-f(x)$. We have the embeddings
\begin{equation} \label{embedding}
\|f\|_{H^\beta} \lesssim \|f\|_{\tilde{H}^\beta} \lesssim \|f\|_{H^{\beta'}}, \ \beta<\beta'.
\end{equation}
Let us prove the first embedding:
$$(-\Delta+1)^{\beta/2} f(x)=C_\beta f(x) + \int (f(x-y)-f(x))G_{\beta}(y)dy,$$
where $C_\beta $ is an analytic continuation of $C_\beta:=\int G_\beta(x)dx$ with $\re(\beta)<n$ and $G_{\beta}(y):=\int e^{iy\cdot k}(|k|^2+1)^{\beta/2} dk$. Note that $G_{\beta}(y) \sim |y|^{-n-\beta}$ as $|y|\rightarrow 0$ and is exponentially
decaying at $\infty$. So
$$\|f\|_{H^\beta}=\|(-\Delta+1)^{\beta/2} f\|_{L^2} \leq C_\beta \|f\|_{L^2} + \int \frac{dy}{|y|^{n+\beta}}\|\Delta _y f\|_{L^2}
\lesssim \|f\|_{\tilde{H}^\beta},$$
which proves the first embedding in \eqref{embedding}.

For the second embedding, let $\varphi=(-\Delta+1)^{\beta'/2} f$. Then
$$f=(-\Delta+1)^{-\beta'/2}\varphi=\int \tilde{G}_{\beta'}(x-y)\varphi(y)dy,$$
where $\tilde{G}_{\beta'}(y):=\int e^{iy\cdot k}(|k|^2+1)^{-\beta'/2} dk$. Note that $\tilde{G}_{\beta'}(y) \sim |y|^{-n+\beta'}$ as $|y|\rightarrow 0$ and is exponentially decaying at $\infty$. Let $\beta < \beta'' < \beta'.$
Then
\begin{equation}\label{2ndembedding}
\begin{array}{ll}
& \int_{|h|\leq 1} \frac{dh}{|h|^{n+\beta}}\|\Delta_h f\|_{L^2}\\
= & \int_{|h|\leq 1} \frac{dh}{|h|^{n+\beta}}\|\int_{|x-y| \leq 2} (\tilde{G}_{\beta'}(x+h-y)-\tilde{G}_{\beta'}(x-y))\varphi(y) dy\\
&+\int_{|x-y| \geq 2} (\tilde{G}_{\beta'}(x+h-y)-\tilde{G}_{\beta'}(x-y))\varphi(y) dy\|_{L^2}\\
\lesssim  & \int_{|h|\leq 1} \frac{dh}{|h|^{n+\beta}}(|h|^{\beta''}\|\int_{|x-y|\leq 2} |x-y|^{-n+\beta'-\beta''}|\varphi(y)|dy\|_{L^2}
+|h|\|\int_{|x-y|\geq 2} |x-y|^{-n+\beta'-1}|\varphi(y)|dy\|_{L^2})\\
\lesssim & \|\varphi\|_{L^2}=\|f\|_{H^{\beta'}}
\end{array}
\end{equation}
and
$$\int_{|h|\geq 1}\frac{dh}{|h|^{n+\beta}}\|\Delta_h f\|_{L^2} \leq 2\|f\|_{L^2}\int_{|h|\geq 1}\frac{dh}{|h^{n+\beta}|}\lesssim \|f\|_{H^{\beta'}}.$$
This proves the second embedding in \eqref{embedding}.

Using \eqref{embedding}, we obtain
\begin{equation*}
\begin{array}{ll}
\|\prod_{j=1}^s \xi_j\|_{H^\beta} & \lesssim \int \frac{dh}{|h|^{n+\beta}}\|\Delta_h \prod_{j=1}^s \xi_j\|_2\\
& \leq \sum_{i=1}^s \int \frac{dh}{|h|^{n+\beta}}\|\prod_{j=1}^{i-1}\xi_j \Delta_h \xi_i \prod_{j=i+1}^s T_h \xi_j\|_2\\
& \leq \sum_{i=1}^s (\prod_{j \neq i}\|\xi_j\|_{p_{j}^{(i)}}) \int \frac{dh}{|h|^{n+\beta}} \|\Delta_h \xi_i\|_{p_{i}^{(i)}}, 
\end{array}
\end{equation*}
where $T_h f(x)=f(x+h)$, $\sum_{j=1}^s \frac{1}{p_{j}^{(i)}}=\frac{1}{2}$. Using appropriate embeddings, we conclude finally that
\begin{equation} \label{prodHbeta}
\|\prod_{j=1}^s \xi_j\|_{H^\beta}  \lesssim \sum_{i=1}^s \prod_{j =1}^{s}\|\xi_j\|_{H^{c^{(i)}_j}}, 
\end{equation}
where  $c_j^{(i)}> \frac{n}{2}-\frac{n}{p_j^{(i)}} \ \forall j\neq i$
and $c_i^{(i)}-\beta > \frac{n}{2}-\frac{n}{p_i^{(i)}}$. Similarly as before we know that $\sum_{j=1}^s c_j^{(i)}-\beta > \frac{n}{2}(s-1)$, which guarantees the
existence of $p_j^{(i)}$.

For $k$  not an integer, we write
\begin{equation} \label{NHk-1}
\|N(\xi)\|_{H^{k-1}}\sim \|(-\Delta+1)^{\beta/2}\nabla^m N(\xi)\|_{L^2},
\end{equation}
where $m=[k]-1$ and $\beta=k-[k]\in (0,1)$. $\nabla^m N(\xi)$ is treated as before to obtain
\begin{equation} \label{nablamN}
\nabla^m N(\xi) \sim \xi^t (\nabla\xi)^r \nabla^{\alpha_1}\xi \cdots \nabla^{\alpha_s}\xi, 
\end{equation}
where $t\leq m+2$, $r\leq m+2$, $2 \leq \alpha_j \leq m-s+3$, $\sum_{j=1}^s \alpha_j \leq m+1+s$, $s\leq m+1$ and $t+r+s \geq 2$.

If $\alpha_j < m+2 \ \forall j$,
then,  using \eqref{prodHbeta} with $\xi_j=\nabla^{\alpha_j}\xi \ \forall j$, $c_j^{(i)}+\alpha_j=k\ \forall j\neq i$
and $c_i^{(i)}+\alpha_i = k+1$, we find
\begin{equation} \label{Hbeta}
\|\xi^t (\nabla \xi)^r\prod_{j=1}^s \nabla^{\alpha_j}\xi\|_{H^\beta}  \lesssim \|\xi\|_{H^{k}}^{r+s-1} \|\xi\|_{H^{k+1}}.
\end{equation}
We use this estimate, together with \eqref{NHk-1} and \eqref{nablamN}, to obtain
\begin{equation} \label{Nest} 
\|N(\xi)\|_{H^{k-1}} \lesssim \sum_{i=1}^{2[k]}\|\xi\|_{H^k}^i\|\xi\|_{H^{k+1}}.
\end{equation}

If $\alpha_s=m+2$ and therefore $s=1$, then we let $f=\xi^t(\nabla\xi)^r$ and proceed as
\begin{equation} \label{2ndcase}
(-\Delta+1)^{\beta/2}f\nabla^{m+2}\xi=f(-\Delta+1)^{\beta/2}\nabla^{m+2}\xi+[(-\Delta+1)^{\beta/2}, f]\nabla^{m+2}\xi.
\end{equation}
The first term on the r.h.s. is easy to estimate:
\begin{equation}  \label{est1stterm}
\begin{array}{ll}
&\|f(-\Delta+1)^{\beta/2}\nabla^{m+2}\xi\|\leq \|f\|_{\infty}\|\xi\|_{H^{k+1}}\\
\leq & \|\xi\|_{H^k}^{t+r}\|\xi\|_{H^{k+1}} \leq \sum_{p=1}^2 \|\xi\|_{H^k}^p \|\xi\|_{H^{k+1}}.
\end{array}
\end{equation}
To estimate the second term in the r.h.s. we note that
\begin{equation*}
\begin{array}{ll}
& [(-\Delta+1)^{\beta/2}, f]\eta = \int (f(x)-f(y))G_\beta(x-y)\eta(y)dy\\
= & \int \eta(x-z)(f(x-z)-f(x))G_\beta(z)dz.
\end{array}
\end{equation*}
Using this representation we obtain for $\beta'>\beta$,
\begin{equation*}
\begin{array}{ll}
& \|[(-\Delta+1)^{\beta/2}, f]\eta\|_2\\
\leq & \sup_z \|\eta(\cdot-z)\frac{f(\cdot-z)-f(\cdot)}{|z|^{\beta'}}\|_{L^2(dx)} \int |z|^{\beta'}|G_\beta(z)|dz\\
\lesssim & \sup_z \|\eta(\cdot-z)\frac{f(\cdot-z)-f(\cdot)}{|z|^{\beta'}}\|_{L^2(dx)} \\
\leq & \|\eta\|_q \sup_z \|\frac{\Delta_z f}{|z|^{\beta'}}\|_p,
\end{array}
\end{equation*}
where $\frac{1}{p}+\frac{1}{q}=\frac{1}{2}$.
Similar to \eqref{2ndembedding}, we have
$$\sup_z \|\frac{1}{|z|^\gamma}\Delta_z f\|_{H^b} \lesssim \|f\|_{H^{b+\gamma'}}, \ \gamma' > \gamma.$$
Using this estimate and Sobolev embedding theorem, we find
$$\|[(-\Delta+1)^{\beta/2}, f]\eta\|_2 \lesssim \|\eta\|_{H^a} \sup_z \|\frac{\Delta_z f}{|z|^{\beta'}}\|_{H^b}
\lesssim \|\eta\|_{H^a}\|f\|_{H^{b+\beta''}},$$
where $\beta''>\beta'$, $a>\frac{n}{2}-\frac{n}{q}$, $b>\frac{n}{2}-\frac{n}{p}$. Taking $f=\xi^t(\nabla\xi)^r$
and $\eta=\nabla^{m+2}\xi$, $a=\beta$, we find
$$\|[(-\Delta+1)^{\beta/2}, f]\nabla^{m+2}\xi\| \leq \|\xi^t(\nabla\xi)^r\|_{H^{r+\beta''}}\|\xi\|_{H^{k+1}}.$$
Note that $\beta''+r>n-\frac{n}{2}=\frac{n}{2}$. Let $\beta''+r=j$. As before, we estimate
\begin{equation*}
\|\xi^t(\nabla\xi)^r\|_{H^j} \lesssim \sum_{j_1+\cdots+j_{t+r}=j}\|\nabla^{j_1}\xi\cdots\nabla^{j_{t}}\xi
\nabla^{j_{t+1}+1}\xi\cdots\nabla^{j_{t+r}+1}\xi\|_{2}
\lesssim \|\xi\|_{H^{j+1}}^{t+r} \ \forall j>\frac{n}{2}.
\end{equation*}
Since $k>\frac{n}{2}+1$, we can take $j=k-1$ and so
$$\|[(-\Delta+1)^{\beta/2}, f]\nabla^{m+2}\xi\| \leq \|\xi\|_{H^k}^{t+r}\|\xi\|_{H^{k+1}},$$
where, recall, $f=\xi^t(\nabla\xi)^r$. This inequality together with \eqref{NHk-1},  \eqref{2ndcase} and \eqref{est1stterm} implies \eqref{Nest} also in this case. As was mentioned above \eqref{Nest} implies \eqref{N-pro'}. 
\end{proof}

\bibliographystyle{abbrv}
\bibliography{MCF}
\end{document}